\DeclarePairedDelimiter{\ceil}{\lceil}{\rceil}
\newcommand{\kom}[1]{}
\renewcommand{\kom}[1]{{\bf [#1]}}
 \def\1{\raisebox{2pt}{\rm{$\chi$}}}
\newtheorem{theorem}{Theorem}[section]
\newtheorem{corollary}[theorem]{Corollary}
\newtheorem{lemma}[theorem]{Lemma}
\newtheorem{proposition}[theorem]{Proposition}
\newtheorem{definition}[theorem]{Definition}
\newtheorem{remark}[theorem]{Remark}
 \def\1{\raisebox{2pt}{\rm{$\chi$}}}
\def\vint_#1{\mathchoice%
          {\mathop{\kern 0.2em\vrule width 0.6em height 0.69678ex depth -0.58065ex
                  \kern -0.8em \intop}\nolimits_{\kern -0.4em#1}}%
          {\mathop{\kern 0.1em\vrule width 0.5em height 0.69678ex depth -0.60387ex
                  \kern -0.6em \intop}\nolimits_{#1}}%
          {\mathop{\kern 0.1em\vrule width 0.5em height 0.69678ex
              depth -0.60387ex
                  \kern -0.6em \intop}\nolimits_{#1}}%
          {\mathop{\kern 0.1em\vrule width 0.5em height 0.69678ex depth -0.60387ex
                  \kern -0.6em \intop}\nolimits_{#1}}}
\def\vintslides_#1{\mathchoice%
          {\mathop{\kern 0.1em\vrule width 0.5em height 0.697ex depth -0.581ex
                  \kern -0.6em \intop}\nolimits_{\kern -0.4em#1}}%
          {\mathop{\kern 0.1em\vrule width 0.3em height 0.697ex depth -0.604ex
                  \kern -0.4em \intop}\nolimits_{#1}}%
          {\mathop{\kern 0.1em\vrule width 0.3em height 0.697ex depth -0.604ex
                  \kern -0.4em \intop}\nolimits_{#1}}%
          {\mathop{\kern 0.1em\vrule width 0.3em height 0.697ex depth -0.604ex
                  \kern -0.4em \intop}\nolimits_{#1}}}
\newcommand{\kint}{\vint}
\newcommand{\aveint}[2]{\mathchoice%
          {\mathop{\kern 0.2em\vrule width 0.6em height 0.69678ex depth -0.58065ex
                  \kern -0.8em \intop}\nolimits_{\kern -0.45em#1}^{#2}}%
          {\mathop{\kern 0.1em\vrule width 0.5em height 0.69678ex depth -0.60387ex
                  \kern -0.6em \intop}\nolimits_{#1}^{#2}}%
          {\mathop{\kern 0.1em\vrule width 0.5em height 0.69678ex depth -0.60387ex
                  \kern -0.6em \intop}\nolimits_{#1}^{#2}}%
          {\mathop{\kern 0.1em\vrule width 0.5em height 0.69678ex depth -0.60387ex
                  \kern -0.6em \intop}\nolimits_{#1}^{#2}}}
\newcommand{\dist}{\operatorname{dist}}
\newcommand{\midrg}{\operatornamewithlimits{midrange}}
\numberwithin{equation}{section}
\definecolor{color1}{rgb}{0.309, 0.43,0.258}
\definecolor{color2}{rgb}{0.741, 0.502,0.743}
\definecolor{color3}{rgb}{0.580, 0.163,0.107}
\definecolor{color0}{rgb}{0.837, 0.309, 0.241}
\begin{document}

\title[]{Time-dependent tug-of-war games and normalized parabolic $p$-Laplace equations}

\author[Han]{Jeongmin Han}
\address{Department of Mathematical Sciences, Seoul National University 
1, Gwanak-ro, Gwanak-gu, Seoul, Republic of Korea}
\email{hanjm9114@snu.ac.kr}


\keywords{Dynamic programming principle, Normalized $p$-Laplacian, Stochastic games, Tug-of-war, Viscosity solutions.} 
\subjclass[2020]{Primary: 35K92; Secondary: 35K20, 91A15.}

\begin{abstract}
This paper concerns value functions of time-dependent tug-of-war games.
We first prove the existence and uniqueness of value functions
and verify that these game values satisfy a dynamic programming principle.
Using the arguments in the proof of existence of game values, we can also deduce asymptotic behavior of game values when $T \to \infty$.
Furthermore, we investigate boundary regularity for game values.
Thereafter, based on the regularity results for value functions, we deduce that game values converge to viscosity solutions of the normalized parabolic $p$-Laplace equation.
\end{abstract}

\maketitle

\tableofcontents

\section{Introduction}
Attempts to understand partial differential equations in relation to stochastic processes have been developed for several decades.
These approaches give us different perspectives on studying PDEs.
For instance, we can derive Laplace equation from standard Brownian motion and its transition semigroup (see, for example, \cite{MR3234570}).
This is a simple and classic example, but similar approaches have been suggested for more general equations as well.
Here we consider a generalization of the Laplace equation.
The normalized version of the $p$-Laplace operator can be written as
$$ \Delta_{p}^{N} u := \Delta u +(p-2)\Delta_{\infty}^{N} u =\Delta u + (p-2)\frac{\langle D^{2}uDu, Du\rangle}{|Du|^{2}}.$$
To deal with this nonlinear operator, we need to employ another diffusion process which is called a tug-of-war game. 
This stochastic game is a control interpretation associated with the $\infty$-Laplacian.
By adding noise to the tug-of-war game, we can adopt a stochastic view for the normalized $p$-Laplace equation.

In this paper, we study value functions of time-dependent tug-of-war games with noise.
In particular, we investigate several properties of game values such as regularity and long-time asymptotics.
Moreover, we also present uniform convergence of value functions to viscosity solutions of the normalized parabolic $p$-Laplace equation with $1 < p< \infty$ as the step size of game goes to zero.

Our study is the parabolic counterpart of \cite{MR3471974}.
In that paper, the author proved the existence, uniqueness and continuity of value functions for time-independent tug-of-war games.
We extend these results for game values to the case of time-dependent games and show that 
our value functions pointwisely converges to game values in \cite{MR3471974}
as $T \to \infty$.
On the other hand, we also establish uniform convergence of game values as $\epsilon \to 0$.
For this purpose, we need suitable regularity results.
We already showed interior regularity estimates for game values in \cite{MR4153524}.
In this paper, we also give boundary regularity results for value functions.
Actually, the main difficulty occurs in this part.  
Under the settings of \cite{MR3011990}, one can obtain the boundary estimates of value functions by using the exit time of the noise-only process.
Unfortunately, the noise in our settings depends on strategies of each player.
Thus, we establish the desired estimates using another appropriate stochastic game.

To derive our desired results, we employ the following DPP
\begin{align} \begin{split} \label{dppvar}
& u_{\epsilon}(x,t) \\ & = \frac{1- \delta(x,t)}{2} \times \\ & 
\bigg[ \hspace{-0.2em} \sup_{\nu \in S^{n-1}} \bigg\{ \alpha u_{\epsilon} \bigg(x+ \epsilon \nu ,t-\frac{\epsilon^{2}}{2} \bigg)  \hspace{-0.3em}+ \hspace{-0.3em} \beta \kint_{B_{\epsilon}^{\nu} }u_{\epsilon} \bigg(x+ h,t-\frac{\epsilon^{2}}{2} \bigg) d \mathcal{L}^{n-1}(h) \bigg\}   \\ &   \hspace{-0.3em}
+ \hspace{-0.3em} \inf_{\nu \in S^{n-1}} \bigg\{ \alpha u_{\epsilon} \bigg(x+ \epsilon \nu ,t-\frac{\epsilon^{2}}{2} \bigg)  \hspace{-0.3em}+ \hspace{-0.3em} \beta \kint_{B_{\epsilon}^{\nu} }u_{\epsilon} \bigg(x+ h,t-\frac{\epsilon^{2}}{2} \bigg) d \mathcal{L}^{n-1}(h) \bigg\} \bigg] \\ & 
 + \delta(x,t) F(x, t),
\end{split}
\end{align} 
where $0<\alpha, \beta <1$ with $\alpha + \beta =1$.
Note that $S^{n-1}$ is the $n$-dimensional unit sphere centered at the origin, $B_{\epsilon}^{\nu} $ is an $(n-1)$-dimensional $\epsilon$-ball which is centered at the origin and orthogonal to a unit vector $\nu$,  $\mathcal{L}^{n-1} $ is the $(n-1)$-dimensional Lebesgue measure,
$\delta$ is a function to be defined in the next section.
Moreover, $$\kint_{A } f(h) d \mathcal{L}^{n-1}(h): = \frac{1}{\mathcal{L}^{n-1}(A)}\int_{A } f(h) d \mathcal{L}^{n-1}(h) $$
for any $\mathcal{L}^{n-1}$-measurable functions $f$.
In Section 3, we verify that our value functions actually satisfy \eqref{dppvar}.
Once we know the relation between game values and \eqref{dppvar}, 
then we can concentrate on investigating the properties of functions satisfying this DPP.
By the Taylor expansion, we can expect that
if $u$ is a limit of game values $u_{\epsilon}$ as $\epsilon \to 0$, then it satisfies
$$ (n+p)u_{t} = \Delta_{p}^{N} u .$$
This observation provides motivation for the discussion in the last section.

The $p$-Laplace operator was first studied using tug-of-war games in \cite{MR2449057,MR2451291}.
Over the past decade, considerable progress has been made in the theory of value functions for tug-of-war games.
Several mean value characterizations for the $p$-Laplace operator are derived
in \cite{MR2566554,MR2684311,MR2875296}. 
Time-independent games have been studied in \cite{MR3011990,MR3169768,MR3441079,MR3623556,MR3846232,MR4125101,attouchi2021gradient}.
For time-dependent games, see also
\cite{MR2684311,MR3161604,MR3494400,MR3846232,MR4153524}.
We also refer the reader to \cite{MR2868849,MR2971208,MR3177660,MR3299035,MR3602849,lewicka2019robin,lewicka2019robin2} which deal with tug-of-war games under various settings. 
\\ \\
{\bf Acknowledgments}.
This work was supported by NRF-2019R1C1C1003844.
The author would like to thank M. Parviainen, for introducing this topic, valuable discussions and constant support throughout this work. 


\section{Preliminaries}
\subsection{Notations}

We still use the notation $B_{\epsilon}^{\nu} $ and $S^{n-1}$ as in Section 1.
Let $ \Omega $ be a bounded domain.
First we define 
$$ I_{\epsilon} = \{x \in \Omega  : \dist(x, \partial \Omega) < \epsilon \}  \ \textrm{and}$$
$$ O_{\epsilon} = \{x \in \mathbb{R}^{n} \backslash \overline{\Omega} : \dist(x, \partial \Omega) < \epsilon \}.   $$
We also set
$ \Gamma_{\epsilon}= I_{\epsilon}  \cup  O_{\epsilon} \cup \partial \Omega $ and 
$\Omega_{\epsilon} = \overline{\Omega} \cup  O_{\epsilon} .$

For $T>0$, consider a parabolic cylinder $ \Omega_{T} = \Omega \times (0,T] $ with its parabolic boundary $ \partial_{p} \Omega_{T} = ( \overline{\Omega} \times \{ 0 \} ) \cup (\partial \Omega \times (0,T])$.
Similarly to the elliptic case, we define parabolic $\epsilon$-strips
$$ I_{\epsilon, T} = \{ (x,t) \in \Omega  \times [ \epsilon^{2}/2, T] : \dist(x, \partial \Omega) < \epsilon \} \cup \big( \Omega \times (0, \epsilon^{2}/2) \big), $$ 
$$ O_{\epsilon, T} = \{(x,t) \in (\mathbb{R}^{n} \backslash \overline{\Omega}) \times (0,T] : \dist(x, \partial \Omega) < \epsilon \} \cup \big( \Omega_{\epsilon} \times (-\epsilon^{2}/2,0) \big)  $$ and
$$ \Gamma_{\epsilon,T} = I_{\epsilon, T}  \cup O_{\epsilon, T} \cup \partial_{p} \Omega_{T}.  $$
We denote  by $ \Omega_{\epsilon,T} $ the set $ \overline{\Omega}_{T} \cup O_{\epsilon,T} $.

For $ (x,t) \in \Omega_{T}$, we set a ``regularizing function'' $\delta$ in \eqref{dppvar} as follows:
\begin{align*} 
\delta(x,t) = 
\left\{ \begin{array}{ll}
0 & \textrm{in $\Omega_{T} \backslash  I_{\epsilon, T}, $}\\
\min \bigg\{ 1, 1 - \frac{\dist(x, \partial \Omega)}{\epsilon} \bigg\} \times \min \bigg\{  1,1 - \frac{\sqrt{2t}}{\epsilon}\bigg\} & \textrm{in $ I_{\epsilon, T}$, } \\
1 & \textrm{in $O_{\epsilon, T}$.}\\
\end{array} \right. 
\end{align*}
It is not difficult to check that $\delta$ is continuous in $\Omega_{\epsilon, T}$.

We introduce some notations for convenience. 
First, we write
$$ \midrg_{i \in I}A_{i}= \frac{1}{2} \bigg( \sup_{i \in I}A_{i} +\inf_{i \in I}A_{i} \bigg) .$$
And, we also denote by 
$$ \mathscr{A}_{\epsilon}u (x, t; \nu) = \alpha u(x+ \epsilon \nu,t) + \beta \kint_{B_{\epsilon}^{\nu} }u(x+ h,t) d \mathcal{L}^{n-1}(h)  $$
for a bounded measurable function $u$.
Then \eqref{dppvar} can be written as
\begin{align*}
& u_{\epsilon}(x,t)  = (1- \delta(x,t))\midrg_{\nu \in S^{n-1}}  \mathscr{A}_{\epsilon} u_{\epsilon} \bigg( x, t-\frac{\epsilon^{2}}{2}; \nu \bigg)   
 + \delta(x,t) F(x, t).
\end{align*} 
We call this bounded and measurable function $u$ a \emph{solution} to the DPP \eqref{dppvar}.
\subsection{Basic concepts} 

Let $\Omega \subset \mathbb{R}^{n}$ be a bounded domain, $T>0$ and  $\alpha, \beta \in (0,1)$ be fixed numbers  with $\alpha + \beta = 1$.
We also consider a function $F \in C(\Gamma_{\epsilon, T} )$.
From now on, we will use the symbol $u_{\epsilon}$ to denote a function satisfying the DPP
\eqref{dppvar} in $\Omega_{T}$ for given $F$. 

We consider two-player tug-of-war games related to \eqref{dppvar}.
There are various settings for these games. 
In particular, our setting can be regarded as a parabolic version of games in \cite{MR3623556}.

Our game setting is as follows.
There is a token located at a point $(x_{0},t_{0}) \in \Omega_{T}$.
Players will move it at each turn according to the outcome of the following processes.
We write locations of the token as $(x_{1},t_{1}),(x_{2},t_{2}), \cdots$ 
and denote by $Z_{j} =(x_{j},t_{j}) $ for our convenience.

When $Z_{j} \in \Omega \backslash I_{\epsilon} $, Player I and II choose some vectors
$ \nu_{j}^{\mathrm{I}}, \nu_{j}^{\mathrm{II}} \in  \partial B_{\epsilon}$. 
First, players compete to move token with a fair coin toss.
Next, they have one more stochastic process to determine how to move the token. 
The winner of first coin toss, Player $i\in \{ \mathrm{I}, \mathrm{II} \}$ can move the token to direction of the chosen vector $\nu_{j}^{i}$ with probability $ \alpha $. 
Otherwise, the token is moved uniformly random in the $(n-1)$-ball perpendicular to $\nu_{j}^{i}$.
After these processes are finished, $t_{j} $ is changed by $t_{j+1}= t_{j} - \epsilon^{2}/2 $.

If $Z_{j} \in \Gamma_{\epsilon} $, the game progresses in the same way as above with probability $ 1 - \delta (Z_{j})$. 
On the other hand, with probability $ \delta (Z_{j}) $, the game is over and Player II pays Player I  payoff $F(Z_{j})$.

We denote by $\tau$ the number of total turns until end of the game.
Observe that $\tau$ must be finite in our setting since the game ends when $ t \le 0$.

Now we give mathematical construction for this game.
Let $ \xi_{0}, \xi_{1}, \cdots $ be iid random variables to have a uniform distribution $U(0,1) $.
This process $\{\xi_{j} \}_{j=0}^{\infty} $ is independent of $ \{ Z_{j} \}_{j=0}^{\infty} $.

Define $\tilde{C}:=\{ 0,1 \} $.
We set random variables $c_{0}, c_{1}, \cdots \in \tilde{C} $ as follows: 
\begin{align*} 
c_{j} = 
\left\{ \begin{array}{ll}
0 & \textrm{when $\xi_{j-1} \le 1- \delta(Z_{j-1}) $,}\\
1 & \textrm{when $\xi_{j-1} > 1- \delta(Z_{j-1})$}\\
\end{array} \right. 
\end{align*}
for $j \ge 1$ and $c_{0}=0$.
Then we can write the stopping time $\tau$ by $$\tau := \inf \{  j \ge 0 : c_{j+1}=1 \} .$$

In our game, each player chooses their strategies by using past data (history).
We can write a history as the following vector 
$$ \big((c_{0},Z_{0}),(c_{1},Z_{1}),\cdots,(c_{j},Z_{j})\big).$$
Then, the strategy of Player $i$ can be defined by a Borel measurable function as
$\mathcal{S}_{i} = \{S_{i}^{j}\}_{j=1}^{\infty}$ with
$$ S_{i}^{j}: \{ (c_{0}, Z_{0}) \} \times \cup_{k=1}^{j-1}(\tilde{C} \times \Omega_{\epsilon,T}) \to \partial B_{\epsilon}(0)$$
for any $j \in \mathbb{N} $.

Now we define a probability measure $\mathbb{P}_{S_{\mathrm{I}},S_{\mathrm{II}}}^{Z_{0}}$ natural product $\sigma$-algebra of the space of all game trajectories for any starting point $Z_{0} \in \Omega_{\epsilon,T}  $.
By Kolmogorov's extension theorem, we can construct the measure to the family of transition densities
\begin{align*}
&\pi_{S_{\mathrm{I}},S_{\mathrm{II}}}((c_{0},Z_{0}),(c_{1},Z_{1}),\cdots,(c_{j},Z_{j}); C,A_{j+1}) \\ &
= (1 - \delta(Z_{j})) \pi_{S_{\mathrm{I}},S_{\mathrm{II}}}^{local}((Z_{0},Z_{1},\cdots,Z_{j});A_{j+1}) \mathbb{I}_{0}(C) \mathbb{I}_{c_{j}}(\{ 0 \}) \\ &
\quad + \delta(Z_{j})\mathbb{I}_{Z_{j}}(A_{j})\mathbb{I}_{1}(C) \mathbb{I}_{c_{j}}(\{ 0 \})
+\mathbb{I}_{Z_{j}}(A_{j}) \mathbb{I}_{c_{j}}(\{ 1 \})
\end{align*}
for $A_{n} = A \times \{ t_{n} \} $ ($A$ is any Borel set in $\mathbb{R}^{n}$ and $n \ge 0 $) and $C \subset \tilde{C}$,
where
\begin{align*}
& \pi_{S_{\mathrm{I}},S_{\mathrm{II}}}^{local}(Z_{0},Z_{1},\cdots,Z_{j};A_{j+1}) \\ &
= \frac{1}{2} \bigg[ \alpha ( \mathbb{I}_{(x_{j}+\nu_{j+1}^{I}, t_{j+1})}(A_{j+1}) + \mathbb{\mathrm{I}}_{(x_{j}+\nu_{j+1}^{\mathrm{II}}, t_{j+1})}(A_{j+1}) \big)
\\ & \qquad + \frac{\beta}{\omega_{n-1} \epsilon^{n-1}} \big(
\mathcal{L}^{n-1}(B_{\epsilon}^{\nu_{j+1}^{\mathrm{I}}} (Z_{j}) \cap A_{j+1})+\mathcal{L}^{n-1}(B_{\epsilon}^{\nu_{j+1}^{\mathrm{II}}}(Z_{j}) \cap A_{j+1}
\big) \bigg].
\end{align*}
Here, $\omega_{n-1}= \mathcal{L}^{n-1}(B_{1}^{n-1})$ where $ B_{1}^{n-1}$ is the $(n-1)$-dimensional unit ball and 
\begin{align*} 
\mathbb{I}_{z}(B) = 
\left\{ \begin{array}{ll}
0 & \textrm{when $z \notin B $,}\\
1 & \textrm{when $z \in B$.}\\
\end{array} \right. 
\end{align*}

Finally, for any starting point $Z_{0}=(x_{0},t_{0}) \in \Omega_{T} $, we define value functions $u_{\mathrm{I}} $ and $u_{\mathrm{II}}$ of this game for Player I and II by
$$u_{\mathrm{I}}(Z_{0})  =  \sup_{S_{\mathrm{I}}} \inf_{S_{\mathrm{II}}} \mathbb{E}_{S_{\mathrm{I}},S_{\mathrm{II}}}^{Z_{0}}[F(Z_{\tau})]$$ and $$u_{\mathrm{II}}(Z_{0})  = \inf_{S_{\mathrm{II}}} \sup_{S_{\mathrm{I}}} \mathbb{E}_{S_{\mathrm{I}},S_{\mathrm{II}}}^{Z_{0}}[F(Z_{\tau})],$$
respectively.

\section{The existence and uniqueness of game values}

In this section, we study the existence and uniqueness of functions satisfying the DPP \eqref{dppvar} with continuous boundary data $F$.
Moreover, we also observe the relation between these functions and value functions for time-dependent tug-of-war games.

Before showing the existence and uniqueness of functions satisfying \eqref{dppvar}, we need to check a subtle issue.
In the DPP, the value of $u_{\epsilon}(x,t)$ is determined by values of the function in $B_{\epsilon}(x) \times \{ t - \epsilon^{2}/2 \} $.
And we also see that \eqref{dppvar} contains integral terms for the function at time $t - \epsilon^{2}/2   $.
Thus, we have to consider the measurability for the function $u_{\epsilon}$, more precisely, for strategies of our game.

In general, existence of measurable strategies is not guaranteed (for example, see \cite[Example 2.4]{MR3161602}). 
But we can avoid this problem under our setting.
The ``regularizing function'' $\delta$ plays an important role in this issue.

We begin this section by observing a basic property of the operator $ \mathscr{A}_{\epsilon}$. 
\begin{proposition} \label{contilem} Let $u \in  C ( \overline{\Omega}_{\epsilon,T} )$. 
Then  $  \mathscr{A}_{\epsilon}u (x, t; \nu) $ is continuous with respect to each variable in $\overline{\Omega}_{T, \epsilon} \times \partial B_{\epsilon}(0) $.
\end{proposition}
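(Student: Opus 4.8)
The plan is to prove continuity of $\mathscr{A}_\epsilon u(x,t;\nu) = \alpha u(x+\epsilon\nu, t) + \beta \vint_{B_\epsilon^\nu} u(x+h,t)\, d\mathcal{L}^{n-1}(h)$ in each of its three arguments by treating the two summands separately. The first term $\alpha u(x+\epsilon\nu, t)$ is simply the composition of the continuous function $u$ on the compact set $\overline{\Omega}_{\epsilon,T}$ with the continuous map $(x,t,\nu)\mapsto (x+\epsilon\nu, t)$, so its continuity in each variable is immediate. The real content is the averaged integral term, which I will denote $\Phi(x,t;\nu) := \vint_{B_\epsilon^\nu} u(x+h,t)\, d\mathcal{L}^{n-1}(h)$, where $B_\epsilon^\nu$ is the $(n-1)$-dimensional $\epsilon$-ball centered at the origin and orthogonal to $\nu$.

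First I would handle continuity of $\Phi$ in $x$ (and symmetrically in $t$) for a fixed $\nu$. Here the domain of integration $B_\epsilon^\nu$ does not move, so I can write $\Phi(x,t;\nu) = \frac{1}{\omega_{n-1}\epsilon^{n-1}}\int_{B_\epsilon^\nu} u(x+h,t)\, d\mathcal{L}^{n-1}(h)$ and invoke uniform continuity of $u$ on the compact set $\overline{\Omega}_{\epsilon,T}$: given $\eta>0$ there is $\rho>0$ so that $|u(y,s)-u(y',s')|<\eta$ whenever $|y-y'|+|s-s'|<\rho$, and then $|\Phi(x,t;\nu)-\Phi(x',t';\nu)|\le \eta$ once $|x-x'|+|t-t'|<\rho$. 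Since $u$ is defined on all of $\overline{\Omega}_{\epsilon,T}$ and the $\epsilon$-ball $B_\epsilon^\nu(x)$ stays inside this set for $x\in\overline{\Omega}_T$, the integrand is well defined throughout.

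The main obstacle is continuity of $\Phi$ in the direction variable $\nu$, because now the disk $B_\epsilon^\nu$ of integration tilts as $\nu$ varies, so one cannot just pass a limit under a fixed integral sign. The clean way around this is to pull back to a fixed domain: choose, for $\nu$ near a reference direction $\nu_0$, a rotation $R_\nu\in SO(n)$ depending continuously on $\nu$ with $R_\nu\nu_0 = \nu$ (such a choice exists locally, e.g.\ via the rotation in the plane spanned by $\nu_0$ and $\nu$), so that $B_\epsilon^\nu = R_\nu(B_\epsilon^{\nu_0})$ and, by the change of variables $h = R_\nu h'$ (which preserves $(n-1)$-dimensional Lebesgue measure on these disks),
\begin{equation*}
\Phi(x,t;\nu) = \frac{1}{\omega_{n-1}\epsilon^{n-1}}\int_{B_\epsilon^{\nu_0}} u(x + R_\nu h', t)\, d\mathcal{L}^{n-1}(h').
\end{equation*}
Now the domain is fixed, $(h',\nu)\mapsto u(x+R_\nu h', t)$ is continuous and uniformly bounded on a compact set, so dominated convergence (or again a direct uniform-continuity estimate using $|R_\nu h' - R_{\nu_0}h'|\le \|R_\nu - R_{\nu_0}\|\,|h'| \le \epsilon\|R_\nu - R_{\nu_0}\|$ together with continuity of $\nu\mapsto R_\nu$) gives $\Phi(x,t;\nu)\to\Phi(x,t;\nu_0)$ as $\nu\to\nu_0$. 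Combining the three cases and the trivial first term, $\mathscr{A}_\epsilon u(x,t;\nu)$ is continuous in each variable separately (indeed jointly, by the same uniform estimates), which is the assertion. I would also remark that the boundedness of $u$ on the compact set $\overline{\Omega}_{\epsilon,T}$ is what legitimizes the dominating function in the last step.
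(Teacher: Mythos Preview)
Your proof is correct and follows essentially the same approach as the paper: uniform continuity of $u$ on the compact set $\overline{\Omega}_{\epsilon,T}$ handles the $x$ and $t$ variables directly, and a rotation pulling $B_\epsilon^\nu$ back to a fixed disk handles the $\nu$ variable via the estimate $|R_\nu h'-R_{\nu_0}h'|\le C\epsilon|\nu-\nu_0|$. The paper records the resulting bounds explicitly as $|\mathscr{A}_\epsilon u(x,t;\nu)-\mathscr{A}_\epsilon u(y,s;\chi)|\le \omega_u(|x-y|)+\omega_u(|t-s|^{1/2})+\omega_u(C\epsilon|\nu-\chi|)$, which are reused in the next lemma, so you may want to state them in that quantitative form.
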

\begin{proof}
For any $(x,t),(y,s) \in \Omega_{T} $, let us define a parabolic distance by $d( (x,t), (y,s)) = | x-y | + |t-s|^{1/2}$.
We write the modulus of continuity of a function $f$ with respect to the distance $d$   by $\omega_{f} $.

For fixed $|\nu| = \epsilon $, we can see that for any $ x, y \in \overline{\Omega}$,
\begin{align*}
\bigg| \alpha u \bigg(x+ \epsilon \nu, t-\frac{\epsilon^{2}}{2} \bigg) - \alpha u \bigg(y+ \epsilon \nu, t-\frac{\epsilon^{2}}{2} \bigg)\bigg| \le \alpha \omega_{u}(|x-y|)
\end{align*}
and
\begin{align*}
\bigg| \beta & \kint_{B_{\epsilon}^{\nu} }  u \bigg(\hspace{-0.15em}x+h,t-\frac{\epsilon^2}{2} \bigg) d \mathcal{L}^{n-1}(h) - \beta \kint_{B_{\epsilon}^{\nu} }  u \bigg(\hspace{-0.15em}y+h,t-\frac{\epsilon^2}{2} \bigg) d \mathcal{L}^{n-1}(h) \bigg| \\ &
\le \beta \kint_{B_{\epsilon}^{\nu} } \bigg|  u \bigg(\hspace{-0.15em}x+h,t-\frac{\epsilon^2}{2} \bigg)  -  u \bigg(\hspace{-0.15em}y+h,t-\frac{\epsilon^2}{2} \bigg)  \bigg|d \mathcal{L}^{n-1}(h) \\ &
\le \beta \omega_{u} (|x-y|).
\end{align*}
Thus, we get
$$ |  \mathscr{A}_{\epsilon}u (x, t;\nu)  - \mathscr{A}_{\epsilon}u  (y, t;\nu) | \le \omega_{u}(|x-y|).$$
Next, for any $ t,s >0$, we also calculate that
\begin{align*}
\bigg| \alpha u \bigg(x+ \epsilon \nu, t-\frac{\epsilon^{2}}{2} \bigg) - \alpha u \bigg(x+ \epsilon \nu, s-\frac{\epsilon^{2}}{2} \bigg)\bigg| \le \alpha \omega_{u}(|t-s|^{1/2}),
\end{align*}
\begin{align*}
\bigg| \beta & \kint_{B_{\epsilon}^{\nu} }  u \bigg(\hspace{-0.15em}x+h,t-\frac{\epsilon^2}{2} \bigg) d \mathcal{L}^{n-1}(h) - \beta \kint_{B_{\epsilon}^{\nu} }  u \bigg(\hspace{-0.15em}x+h,s-\frac{\epsilon^2}{2} \bigg) d \mathcal{L}^{n-1}(h) \bigg| \\ &
\le \beta \kint_{B_{\epsilon}^{\nu} } \bigg|  u \bigg(\hspace{-0.15em}x+h,t-\frac{\epsilon^2}{2} \bigg)  -  u  \bigg(\hspace{-0.15em}x+h,s-\frac{\epsilon^2}{2} \bigg)  \bigg|d \mathcal{L}^{n-1}(h) \\ &
\le \beta \omega_{u} (|t-s|^{1/2})
\end{align*}
and hence
$$ |  \mathscr{A}_{\epsilon}u (x, t; \nu)  - \mathscr{A}_{\epsilon}u (x, s; \nu) | \le \omega_{u}(|t-s|^{1/2}).$$
Finally, for any $ \nu, \chi \in S^{n-1}$,
\begin{align*}
&W(x,t,\nu)-W(x,t, \chi) \\&
 =\alpha \bigg[  u \bigg(x+ \epsilon \nu, t-\frac{\epsilon^{2}}{2} \bigg) - u \bigg(x+ \epsilon \chi, t-\frac{\epsilon^{2}}{2} \bigg) \bigg] \\&
+ \beta \bigg[ \kint_{B_{\epsilon}^{\nu} }  u  \bigg(\hspace{-0.15em}x+h,t-\frac{\epsilon^2}{2} \bigg) d \mathcal{L}^{n-1}(h) - \kint_{B_{\epsilon}^{\chi} }  u  \bigg(\hspace{-0.15em}x+h,t-\frac{\epsilon^2}{2} \bigg) d \mathcal{L}^{n-1}(h) \bigg].
\end{align*}
Combining the above results, we see that
\begin{align*}
& |W(x,t,\nu)-W(x,t, \chi)  | \\ &
\le \alpha \epsilon \omega_{u} (\epsilon|\nu-\chi|) \\& +  \beta \kint_{B_{\epsilon}^{\nu} } \bigg|  u \bigg(\hspace{-0.15em}x+h,t-\frac{\epsilon^2}{2} \bigg)  - u \bigg(\hspace{-0.15em}x+Ph,t-\frac{\epsilon^2}{2} \bigg)  \bigg|d \mathcal{L}^{n-1}(h) 
\end{align*}
where $P: \nu^{\perp} \to \chi^{\perp}$ is a rotation satisfying $ |h-Ph| \le C|h||\nu-\chi|$.
Here, we check that
\begin{align*}
\kint_{B_{\epsilon}^{\nu} } \bigg|  u  \bigg(\hspace{-0.15em}x+h,t-\frac{\epsilon^2}{2} \bigg)  -  u \bigg(\hspace{-0.15em}x+Ph,t-\frac{\epsilon^2}{2} \bigg)  \bigg|d \mathcal{L}^{n-1}(h) & \le \omega_{u}(|h-Ph|) 
\\ &  \le \omega_{u}(C \epsilon|\nu-\chi|).
\end{align*}
Therefore, we obtain $$|  \mathscr{A}_{\epsilon}u_{\epsilon} (x, t;\nu)  - \mathscr{A}_{\epsilon}u_{\epsilon} (x, t;\chi) | \le \omega_{u}(C\epsilon|\nu-\chi|). $$
Now we can conclude the proof to combine above results.
\end{proof}

Next we observe that the operator $T$ preserves continuity and monotonicity.
For convenience, we write that
\begin{align} \label{deft}
&Tu(x,t) = (1- \delta(x,t))\midrg_{\nu \in S^{n-1}} \mathscr{A}_{\epsilon}u \bigg( x, t- \frac{\epsilon^{2}}{2} ; \nu \bigg)
 + \delta(x,t) F(x, t ).
\end{align}


\begin{lemma} \label{mnpc}
For any $ u \in C(\overline{\Omega}_{\epsilon,T})$, $ Tu$ is also in $ C(\overline{\Omega}_{\epsilon,T}) $. 
Furthermore, for any  $ u, v \in C(\overline{\Omega}_{\epsilon,T})$ with $ u \le v$, it holds that
$$ Tu \le Tv.$$
\end{lemma}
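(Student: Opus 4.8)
The plan is to prove the two assertions separately, using Proposition~\ref{contilem} as the workhorse for the continuity part and an elementary comparison argument for the monotonicity part.

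\textbf{Continuity.} First I would note that $Tu$ is a combination of three ingredients: the constant-in-the-last-variable multiplier $1-\delta(x,t)$, the term $\midrg_{\nu \in S^{n-1}} \mathscr{A}_{\epsilon}u(x,t-\epsilon^2/2;\nu)$, and the term $\delta(x,t)F(x,t)$. Since $\delta$ was already observed to be continuous on $\Omega_{\epsilon,T}$ and $F \in C(\Gamma_{\epsilon,T})$, and since sums and products of continuous bounded functions are continuous, it suffices to show that $(x,t) \mapsto \midrg_{\nu \in S^{n-1}} \mathscr{A}_{\epsilon}u(x,t-\epsilon^2/2;\nu)$ is continuous on $\overline{\Omega}_{\epsilon,T}$. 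From Proposition~\ref{contilem} we know $\mathscr{A}_{\epsilon}u(x,t;\nu)$ has a modulus of continuity $\omega_u$ in $(x,t)$ that is uniform in $\nu$; more precisely, the proposition gives
\[
|\mathscr{A}_{\epsilon}u(x,t;\nu) - \mathscr{A}_{\epsilon}u(y,s;\nu)| \le \omega_u(d((x,t),(y,s)))
\]
for all $\nu \in S^{n-1}$. Taking $\sup_\nu$ and $\inf_\nu$ of a family of functions that are equi-continuous (same modulus) preserves that modulus, so both $\sup_\nu \mathscr{A}_{\epsilon}u(\cdot;\nu)$ and $\inf_\nu \mathscr{A}_{\epsilon}u(\cdot;\nu)$ have modulus $\omega_u$, and hence so does their average $\midrg_\nu$. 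Composing with the shift $t \mapsto t - \epsilon^2/2$ (which only affects which points of $\overline{\Omega}_{\epsilon,T}$ are relevant, and for which $u$ is still continuous by hypothesis) does not hurt continuity. This yields $Tu \in C(\overline{\Omega}_{\epsilon,T})$.

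\textbf{Monotonicity.} Suppose $u \le v$ pointwise on $\overline{\Omega}_{\epsilon,T}$. Then for every fixed $\nu$ and every $(x,t)$,
\[
\mathscr{A}_{\epsilon}u(x,t;\nu) = \alpha u(x+\epsilon\nu,t) + \beta \kint_{B_\epsilon^\nu} u(x+h,t)\, d\mathcal{L}^{n-1}(h) \le \mathscr{A}_{\epsilon}v(x,t;\nu),
\]
since $\alpha,\beta > 0$ and the average of a larger integrand is larger. Taking $\sup_\nu$ of both sides and, separately, $\inf_\nu$ of both sides preserves the inequality, so $\midrg_\nu \mathscr{A}_{\epsilon}u \le \midrg_\nu \mathscr{A}_{\epsilon}v$ at $(x,t-\epsilon^2/2)$. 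Since $0 \le \delta(x,t) \le 1$, multiplying by $1-\delta(x,t) \ge 0$ and adding $\delta(x,t)F(x,t)$ to both sides gives $Tu(x,t) \le Tv(x,t)$ for all $(x,t)$, i.e. $Tu \le Tv$.

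\textbf{Main obstacle.} There is no deep difficulty here; the entire content has been front-loaded into Proposition~\ref{contilem}. The one point that deserves a careful sentence is the claim that taking a supremum (or infimum) over $\nu \in S^{n-1}$ of a family with a common modulus of continuity in $(x,t)$ again yields that same modulus — this is the standard fact that $|\sup_i a_i - \sup_i b_i| \le \sup_i |a_i - b_i|$, applied with $a_i = \mathscr{A}_{\epsilon}u(x,t;\nu)$ and $b_i = \mathscr{A}_{\epsilon}u(y,s;\nu)$. One should also keep track of the domain: $Tu$ is defined on $\Omega_T$ via the DPP and extended to $\overline{\Omega}_{\epsilon,T}$ by the boundary data $F$ on $\Gamma_{\epsilon,T}$ (where $\delta \equiv 1$ on $O_{\epsilon,T}$), and continuity across $\partial_p\Omega_T$ follows from the continuity of $\delta$ together with $\delta \to 1$ and the matching of values there. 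Beyond that bookkeeping the proof is routine.
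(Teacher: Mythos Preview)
Your proposal is correct and follows essentially the same route as the paper: both rely on Proposition~\ref{contilem} to get a uniform-in-$\nu$ modulus for $\mathscr{A}_\epsilon u$, then use the elementary fact $|\sup_i a_i - \sup_i b_i| \le \sup_i |a_i - b_i|$ (and its $\inf$ analogue) to pass to the midrange, with monotonicity handled by the obvious pointwise comparison. The only organizational difference is that the paper carries out an explicit three-region case analysis ($\overline{O}_{\epsilon,T}$, $\Omega_T \setminus I_{\epsilon,T}$, $\overline{I}_{\epsilon,T}$) with concrete modulus bounds and then checks matching on $\partial I_{\epsilon,T}$, whereas you compress this by invoking the global product/sum structure on $\overline{\Omega}_T$ and handling $O_{\epsilon,T}$ separately; this is a legitimate streamlining, though your remark that ``$\delta \to 1$'' near $\partial_p\Omega_T$ is not quite the right reason for the matching (continuity of $\delta$ across that interface is what you actually need, not that its limit is $1$ from the interior side).
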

\begin{proof}
By the definition of $T$, we can check that $u \le v $ implies  $ Tu \le Tv$ without difficulty.

Next we need to show that $Tu \in C(\overline{\Omega}_{\epsilon,T}) $ if $ u \in C(\overline{\Omega}_{\epsilon,T})$. 
When $ (x,t) \in \overline{O}_{\epsilon,T} $, we see that $Tu=u=F \in C(\overline{O}_{\epsilon,T}) $ by assumption.
We need to consider the case of $ \overline{I}_{\epsilon,T}$ and $\Omega_{T} \backslash I_{\epsilon,T} $.

First assume that $ (x,t), (y,s) \in \Omega_{T} \backslash I_{\epsilon,T}  $. 
Observe that
\begin{align*}
& \big| \midrg_{\nu \in S^{n-1}} \mathscr{A}_{\epsilon}u (x,t;\nu) -\midrg_{\nu \in S^{n-1}} \mathscr{A}_{\epsilon}u (y,s; \nu) \big| \\ &
\le \frac{1}{2} \big| \sup_{\nu \in S^{n-1}} \mathscr{A}_{\epsilon}u (x,t; \nu) -\sup_{\nu \in S^{n-1}} \mathscr{A}_{\epsilon}u (y,s; \nu) \big| \\ & \qquad + 
\frac{1}{2} \big| \inf_{\nu \in S^{n-1}} \mathscr{A}_{\epsilon}u (x,t;\nu) -\inf_{\nu \in S^{n-1}} \mathscr{A}_{\epsilon}u (y,s; \nu) \big| .
\end{align*}
Since
$$  \big| \sup_{\nu \in S^{n-1}} \mathscr{A}_{\epsilon}u (x, t;\nu) -\sup_{\nu \in S^{n-1}} \mathscr{A}_{\epsilon}u (y, s; \nu) \big| \le \sup_{\nu \in S^{n-1}} | \mathscr{A}_{\epsilon}u (x, t; \nu) - \mathscr{A}_{\epsilon}u (y, s;\nu)  | $$ and
$$ \big| \inf_{\nu \in S^{n-1}} \mathscr{A}_{\epsilon}u (x,t; \nu) -\inf_{\nu \in S^{n-1}} \mathscr{A}_{\epsilon}u (y, s; \nu) \big| \le \sup_{\nu \in S^{n-1}} | \mathscr{A}_{\epsilon}u (x, t; \nu) - \mathscr{A}_{\epsilon}u (y,s; \nu)  |,$$
we get
\begin{align*}
 \big| \midrg_{\nu \in S^{n-1}} &\mathscr{A}_{\epsilon}u (x, t; \nu) -\midrg_{\nu \in S^{n-1}} \mathscr{A}_{\epsilon}u (y, s; \nu) \big| \\ &
\le  \sup_{\nu \in S^{n-1}} | \mathscr{A}_{\epsilon}u (x,t; \nu) - \mathscr{A}_{\epsilon}u (y,s; \nu)  | \\ &
\le  \omega_{u}(d((x,t),(y,s))).
\end{align*}
We used the result of Proposition \ref{contilem} in the last inequality.
Thus, $ Tu$ is also continuous in $\Omega_{T} \backslash I_{\epsilon,T} $.

When $ (x,t), (y,s) \in \overline{I}_{\epsilon,T}$,
\begin{align*}
 \big| (1-& \delta(x,t))\midrg_{\nu \in S^{n-1}} \mathscr{A}_{\epsilon}u ( x, t; \nu)
- (1- \delta(y,s))\midrg_{\nu \in S^{n-1}} \mathscr{A}_{\epsilon}u ( y,s; \nu) \big|
\\ & \le (1- \delta(x,t)) \big| \midrg_{\nu \in S^{n-1}} \mathscr{A}_{\epsilon}u (x,t; \nu) -\midrg_{\nu \in S^{n-1}} \mathscr{A}_{\epsilon}u (y,s; \nu) \big| \\ & \qquad
+ |\delta(x,t)-\delta(y,s) \big|\cdot \big| \midrg_{\nu \in S^{n-1}} \mathscr{A}_{\epsilon}u (y,s; \nu) \big| \\ &
\le \omega_{u}(d((x,t),(y,s))) + \frac{3}{\epsilon}||u||_{\infty} d((x,t),(y,s))
\end{align*}
because
\begin{align*}
|\delta(x,t)-\delta(y,s) | \le \frac{3}{\epsilon} d((x,t),(y,s)) .
\end{align*}
Similarly, we can also calculate
\begin{align*}
&|\delta(x,t)F(x,t)-\delta(y,s)F(y,s) | \\ & \le
 \omega_{F}(d((x,t),(y,s)) )+ \frac{3}{\epsilon}||F||_{\infty} d((x,t),(y,s)).
\end{align*}
Combining above results, we obtain the continuity of $Tu $ in $ I_{\epsilon,T}$. 

Finally, we need to check the coincidence of the function value on $\partial I_{\epsilon,T}$.
Observe that $\partial I_{\epsilon,T}$ can be decomposed by two disjoint connected sets $\partial_{p} \Omega_{T}  $ and $ \partial_{p}( \Omega_{T}\backslash I_{\epsilon,T}) $ of $\mathbb{R}^{n} \times \mathbb{R}$.
Then we can observe that 
$$  \lim_{O_{\epsilon,T} \ni (y,s) \to (x,t)} Tu(y,s) = \lim_{I_{\epsilon,T} \ni (y,s) \to (x,t)} Tu(y,s) = Tu(x,t)$$
for any $(x,t) \in \partial_{p} \Omega_{T} $ and
$$  \lim_{\Omega_{T}\backslash I_{\epsilon,T} \ni (y,s) \to (x,t)} Tu(y,s) = \lim_{I_{\epsilon,T} \ni (y,s) \to (x,t)} Tu(y,s) = Tu(x,t)$$
for any $(x,t) \in \partial_{p}( \Omega_{T}\backslash I_{\epsilon,T}) $
by using the above calculation.
Thus we obtain the continuity of $Tu$ and the proof is finished.
\end{proof}
Since $T$ preserves continuity, we do not need to worry about the measurability issue.
Therefore, for any continuous function $u$, $T u$ is well-defined at every point in $\Omega_{T}$.

Now we can obtain the existence and uniqueness of these functions.
\begin{theorem}
Let $F \in C(\Gamma_{\epsilon,T} )$.
Then the bounded function $ u_{\epsilon}$ satisfying \eqref{dppvar} with boundary data $F$ exists and is unique.
\end{theorem}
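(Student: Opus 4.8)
The plan is to exploit the fact that the associated game has a deterministically finite horizon: one step of the right-hand side of \eqref{dppvar} lowers the time variable by $\epsilon^{2}/2$, so after $N:=\lceil 2T/\epsilon^{2}\rceil$ steps the time argument has reached level $\le 0$, a region contained in $O_{\epsilon,T}$ where $\delta\equiv 1$ and the value is dictated by $F$. I would phrase this through the operator $T$ of \eqref{deft}, which by Lemma \ref{mnpc} maps $C(\overline{\Omega}_{\epsilon,T})$ into itself and is monotone, and show that $T^{N}$ is a constant map whose common value is the sought $u_{\epsilon}$.

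The technical engine is the estimate, valid for every $a>0$ and all bounded $u,v$ (all suprema below taken over $\overline{\Omega}_{\epsilon,T}$),
\[
  \sup_{\{t\le a\}}\abs{Tu-Tv}\;\le\;\sup_{\{t\le a-\epsilon^{2}/2\}}\abs{u-v}.
\]
Indeed, on $O_{\epsilon,T}$ and for $t\le 0$ one has $Tu=Tv=F$, while at the remaining points $\abs{Tu-Tv}(x,t)\le(1-\delta(x,t))\sup_{\nu}\abs{\mathscr{A}_{\epsilon}u-\mathscr{A}_{\epsilon}v}\bigl(x,t-\tfrac{\epsilon^{2}}{2};\nu\bigr)$, and for each $\nu$ the operator $\mathscr{A}_{\epsilon}$ is an average of the values on the single slice at time $t-\epsilon^{2}/2$ with total weight $\alpha+\beta=1$, so the last supremum is at most $\sup_{\{t\le a-\epsilon^{2}/2\}}\abs{u-v}$. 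Iterating $N$ times and using $N\epsilon^{2}/2\ge T$ yields $\sup_{\overline{\Omega}_{\epsilon,T}}\abs{T^{N}u-T^{N}v}\le\sup_{\{t\le 0\}}\abs{u-v}$; equivalently, a short induction on $k$ shows that $T^{k}u$ and $T^{k}v$ agree on $\{t\le k\epsilon^{2}/2\}$ as soon as $u=v=F$ on $\{t\le 0\}$.

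For existence, fix any $u_{0}\in C(\overline{\Omega}_{\epsilon,T})$ agreeing with $F$ on the data part $O_{\epsilon,T}\cup\partial_{p}\Omega_{T}$ of the boundary (such a $u_{0}$ exists by Tietze's theorem, and may be chosen with $\norm{u_{0}}_{\infty}\le\norm{F}_{L^{\infty}(\Gamma_{\epsilon,T})}$) and put $u_{\epsilon}:=T^{N}u_{0}$; it is continuous by Lemma \ref{mnpc}. Since $Tu_{0}$ again agrees with $F$ on that set, the previous paragraph gives $T^{N}(Tu_{0})=T^{N}u_{0}$, i.e.\ $Tu_{\epsilon}=T^{N+1}u_{0}=T^{N}u_{0}=u_{\epsilon}$, so $u_{\epsilon}$ solves \eqref{dppvar} with data $F$; its bound $\norm{u_{\epsilon}}_{\infty}\le\norm{F}_{L^{\infty}(\Gamma_{\epsilon,T})}$ follows from the strip induction, each step of $T$ being a convex combination of already bounded values and of $F$. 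For uniqueness, let $u,v$ be bounded solutions; since $\delta\equiv 1$ on $O_{\epsilon,T}$ and the data is prescribed on $\partial_{p}\Omega_{T}$, both equal $F$ on the data set, in particular on $\{t\le 0\}$; as $u=Tu$ and $v=Tv$, the iterated estimate gives $\sup_{\overline{\Omega}_{\epsilon,T}}\abs{u-v}=\sup_{\overline{\Omega}_{\epsilon,T}}\abs{T^{N}u-T^{N}v}\le\sup_{\{t\le 0\}}\abs{u-v}=0$.

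The step I expect to be the main obstacle is not conceptual but bookkeeping: one must handle the parabolic strips $I_{\epsilon,T}$, $O_{\epsilon,T}$, $\partial_{p}\Omega_{T}$ and the half-open time intervals carefully, checking that the DPP at a point $(x,t)$ with $t$ small reads only the data $F$ once the slices $t-\epsilon^{2}/2,\,t-\epsilon^{2},\dots$ have dropped to level $\le 0$, and that the telescoping really reaches $\{t\le 0\}$ within $N$ steps. The measurability concern raised before the theorem does not reappear, because $T$ preserves continuity (Lemma \ref{mnpc}); and the continuity and monotonicity estimates used above are exactly those furnished by Proposition \ref{contilem} and Lemma \ref{mnpc}. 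Unlike the elliptic case, no contraction in a weighted sup-norm is needed, the finiteness of the horizon being deterministic.
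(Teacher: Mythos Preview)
Your proposal is correct and follows essentially the same approach as the paper's proof: both exploit the finite time horizon by inducting on strips of height $\epsilon^{2}/2$, using that the DPP at level $t$ reads only the slice $t-\epsilon^{2}/2$. The paper states this more tersely---existence because ``the operator $T$ is well-defined inductively'' and uniqueness by the same strip-by-strip induction---while you package the identical idea through the iterate $T^{N}$ and the telescoping estimate $\sup_{\{t\le a\}}\abs{Tu-Tv}\le\sup_{\{t\le a-\epsilon^{2}/2\}}\abs{u-v}$; your version is more explicit, but the mechanism is the same.
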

\begin{proof}
We get the desired result via an argument similar to the proof of \cite[Theorem 5.2]{MR3161602}.
We can see the existence of these functions without difficulty since the operator $ T$ is well-defined inductively for any continuous boundary data.

For uniqueness, consider two functions $ u$ and $v$ satisfying $Tu=u $, $Tv=v $ with boundary data $F$.
We see that $ u(\cdot, t)= v(\cdot, t)$ when $0< t \le \epsilon^{2}/2 $ by definition of $T $.
Then we can also get the same result when $\epsilon^{2}/2  < t \le \epsilon^{2} $ because past data of $u $ and $v$ still coincide.
Repeating this process, we obtain $ u(x,t)=v(x,t)$ for any $(x,t) \in \Omega_{T} $ and hence the uniqueness is proved.
\end{proof}

We look into the relation between functions satisfying \eqref{dppvar} and values for parabolic tug-of-war games here.
\begin{theorem}
The value functions of tug-of-war game with noise $u_{\mathrm{I}} $ and $ u_{\mathrm{II}}$ with payoff function $F$ coincide with the function $u_{\epsilon} $.
\end{theorem}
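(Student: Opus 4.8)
The plan is to prove the chain $u_\epsilon \le u_{\mathrm{I}} \le u_{\mathrm{II}} \le u_\epsilon$; the middle inequality is the trivial $\sup\inf \le \inf\sup$, so the whole statement reduces to the two outer inequalities, which are mirror images of each other. The structural fact I would lean on throughout is that $\tau$ is bounded by the deterministic number $N := \lceil 2T/\epsilon^{2}\rceil$: the time coordinate drops by $\epsilon^{2}/2$ at every turn, each move has length $\le \epsilon$ so the token never leaves $\Omega_{\epsilon}$, and $\delta \equiv 1$ on $O_{\epsilon,T}$; hence once the time coordinate enters $(-\epsilon^{2}/2,0]$ the token lies in $\overline{O}_{\epsilon,T}$ and the game stops. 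In particular every expectation below is a finite sum, no optional-stopping subtlety is needed, and I regard $u_\epsilon$ as extended continuously to $\overline{\Omega}_{\epsilon,T}$ with $u_\epsilon = F$ on $\Gamma_{\epsilon,T}$.

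To get $u_{\mathrm{II}} \le u_\epsilon$, fix $\eta > 0$ and let Player II play the strategy that, at a history ending in $Z_{j} = (x_{j},t_{j})$, chooses a direction $\nu_{j+1}^{\mathrm{II}}$ with
\[
\mathscr{A}_{\epsilon}u_{\epsilon}\!\Big(x_{j},t_{j}-\tfrac{\epsilon^{2}}{2};\nu_{j+1}^{\mathrm{II}}\Big) \le \inf_{\nu}\mathscr{A}_{\epsilon}u_{\epsilon}\!\Big(x_{j},t_{j}-\tfrac{\epsilon^{2}}{2};\nu\Big) + \eta\,2^{-(j+1)} .
\]
By Proposition \ref{contilem} the map $(x,t,\nu) \mapsto \mathscr{A}_{\epsilon}u_{\epsilon}(x,t;\nu)$ is continuous, so such a near-minimizer can be selected in a Borel way as a function of $Z_{j}$; this is the one spot where the continuity of $u_\epsilon$ (Lemma \ref{mnpc} and the remark after it) is essential, and is exactly what would fail for a merely measurable solution of \eqref{dppvar}. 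Against an arbitrary Player-I strategy I would then set $M_{j} := u_\epsilon(Z_{j})$ on $\{c_{j}=0\}$ and $M_{j} := F(Z_{j})$ on $\{c_{j}=1\}$. Reading off the transition density, on $\{c_{j}=0\}$ the outcome $(c_{j+1},Z_{j+1})$ equals $(1,Z_{j})$ with probability $\delta(Z_{j})$ and is $(0,Z_{j+1})$ with $Z_{j+1}\sim\pi^{\mathrm{local}}$ otherwise, and
\[
\mathbb{E}_{\pi^{\mathrm{local}}}\big[u_\epsilon(Z_{j+1})\big] = \tfrac12\Big[\mathscr{A}_{\epsilon}u_{\epsilon}\big(x_{j},t_{j}-\tfrac{\epsilon^{2}}{2};\nu_{j+1}^{\mathrm{I}}\big) + \mathscr{A}_{\epsilon}u_{\epsilon}\big(x_{j},t_{j}-\tfrac{\epsilon^{2}}{2};\nu_{j+1}^{\mathrm{II}}\big)\Big] \le \midrg_{\nu}\mathscr{A}_{\epsilon}u_{\epsilon}\Big(x_{j},t_{j}-\tfrac{\epsilon^{2}}{2};\nu\Big) + \tfrac{\eta}{2}\,2^{-(j+1)} .
\]
Plugging this in and using the DPP \eqref{dppvar} to recognize $(1-\delta(Z_{j}))\midrg_{\nu}\mathscr{A}_{\epsilon}u_{\epsilon}(x_{j},t_{j}-\tfrac{\epsilon^{2}}{2};\nu) + \delta(Z_{j})F(Z_{j}) = u_\epsilon(Z_{j})$, I get $\mathbb{E}[M_{j+1}\mid\mathcal{G}_{j}] \le M_{j} + \tfrac{\eta}{2}2^{-(j+1)}$ on $\{c_{j}=0\}$, while on $\{c_{j}=1\}$ the token is frozen and $M_{j+1}=M_{j}$. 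Hence $M_{j}+\eta\,2^{-j}$ is a supermartingale, so $\mathbb{E}[F(Z_{\tau})] = \mathbb{E}[M_{N}] \le M_{0}+\eta = u_\epsilon(Z_{0})+\eta$ for every Player-I strategy; taking the supremum over those strategies and then $\eta \downarrow 0$ gives $u_{\mathrm{II}}(Z_{0}) \le u_\epsilon(Z_{0})$.

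For $u_\epsilon \le u_{\mathrm{I}}$ I would run the mirror-image argument: Player I plays an $\eta\,2^{-(j+1)}$-maximizing direction for $\mathscr{A}_{\epsilon}u_{\epsilon}$, which turns $M_{j}-\eta\,2^{-j}$ into a submartingale against any Player-II strategy, so $\inf_{S_{\mathrm{II}}}\mathbb{E}[F(Z_{\tau})] \ge u_\epsilon(Z_{0})-\eta$ and hence $u_{\mathrm{I}}(Z_{0}) \ge u_\epsilon(Z_{0})$. Combining the three inequalities gives $u_{\mathrm{I}} = u_{\mathrm{II}} = u_\epsilon$. The two places that need real care are exactly the ones flagged above: the Borel selection of the $\eta$-optimal strategies, where Proposition \ref{contilem} (and the continuity of $u_\epsilon$ preserved by the operator in Lemma \ref{mnpc}) does the work; and the bookkeeping at the stopping time, where $u_\epsilon(Z_{\tau}) \ne F(Z_{\tau})$ in general when $0 < \delta(Z_{\tau}) < 1$ — this is precisely why $M_{j}$ is defined to carry $u_\epsilon(Z_{j})$ while the game runs and to switch to $F(Z_{j})$ the moment $c_{j}$ becomes $1$, the DPP \eqref{dppvar} being the identity that makes the switch compatible with the supermartingale estimate. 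Everything else (the explicit expectation under $\pi^{\mathrm{local}}$, the geometric-series bound) is routine.
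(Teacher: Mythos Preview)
Your proof is correct and follows essentially the same approach as the paper: both argue $u_\epsilon \le u_{\mathrm{I}} \le u_{\mathrm{II}} \le u_\epsilon$ by having the appropriate player optimize $\mathscr{A}_\epsilon u_\epsilon$, defining the process $M_j$ (the paper calls it $\Phi(c_j,Z_j)$) that carries $u_\epsilon(Z_j)$ while $c_j=0$ and switches to $F(Z_j)$ once $c_j=1$, and using the DPP to verify the (super/sub)martingale property before applying optional stopping over the deterministically bounded horizon. The only difference is cosmetic: the paper selects an \emph{exact} minimizer of $\nu \mapsto \mathscr{A}_\epsilon u_\epsilon(\cdot;\nu)$ (which exists by Proposition~\ref{contilem} and compactness of $S^{n-1}$, with Borel measurability supplied by a selection theorem from \cite{MR1619545}), whereas you work with $\eta\,2^{-(j+1)}$-near-minimizers and let $\eta\downarrow 0$ at the end --- a standard and slightly more elementary variant that sidesteps the measurable-selection citation.
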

\begin{proof}
We need to deduce that
\begin{align*}
u_{\epsilon} \le u_{\mathrm{I}} \qquad \textrm{and} \qquad  u_{\mathrm{II}} \le u_{\epsilon} 
\end{align*}
since $u_{\mathrm{I}} \le u_{\mathrm{II}} $ by the definition of value functions.

First, we show the latter inequality.
Let $Z_{0} \in \Omega_{T} $ and denote by $S_{\mathrm{II}}^{0} $ a strategy  for Player {II} such that 
$$\mathscr{A}_{\epsilon}u_{\epsilon}(Z_{j}; \nu_{j}^{\mathrm{II}}) = \inf_{\nu \in S^{n-1}}\mathscr{A}_{\epsilon}u_{\epsilon}(Z_{j};\nu)$$
for $ j \ge 0$.
Note that this $ S_{\mathrm{II}}^{0}$ exists since $\mathscr{A}_{\epsilon}u_{\epsilon} $ is continuous on $\nu$ by Proposition \ref{contilem}.
Measurability of such strategies can be shown by using \cite[Theorem 5.3.1]{MR1619545}.

Next we fix an arbitrary strategy $S_{I} $ for Player $I$. 
Define
\begin{align*} 
\Phi(c,x,t)= 
\left\{ \begin{array}{ll}
u_{\epsilon}(x,t)& \textrm{when $c=0 $,}\\
F(x,t) & \textrm{when $c=1$.}\\
\end{array} \right. 
\end{align*}
for any $(x,t) \in \Omega_{\epsilon,T}$.
Then we have
\begin{align*}
&\mathbb{E}_{S_{\mathrm{I}},S_{\mathrm{II}}^{0}}^{Z_{0}}[\Phi(c_{j+1},Z_{j+1})|(c_{0},Z_{0}),\cdots,(c_{j},Z_{j})] \\ & \le \frac{1-\delta(Z_{j})}{2} \big[ \mathscr{A}_{\epsilon}u_{\epsilon}(x_{j}, t_{j+1}; \nu_{j+1}^{\mathrm{I}}) +  \mathscr{A}_{\epsilon}u_{\epsilon}(x_{j}, t_{j+1}; \nu_{j+1}^{\mathrm{II}})  \big]+ \delta(Z_{j})F(Z_{j})
\\ & \le (1-\delta(Z_{j})) \midrg_{\nu \in S^{n-1}} \mathscr{A}_{\epsilon}u_{\epsilon} (x_{j},t_{j+1}; \nu) + \delta(Z_{j})F(Z_{j}) \\ &
= \Phi(c_{j},Z_{j}) .
\end{align*}
Hence, we can see that $M_{k}= \Phi(c_{k},Z_{k})$ is a supermartingale in this case.
Since the game ends in finite steps, we can obtain 
\begin{align*}
u_{\mathrm{II}}(Z_{0}) & = \inf_{S_{\mathrm{II}}} \sup_{S_{\mathrm{I}}} \mathbb{E}_{S_{\mathrm{I}},S_{\mathrm{II}}}^{Z_{0}}[F(Z_{\tau})] 
\le  \sup_{S_{\mathrm{I}}} \mathbb{E}_{S_{\mathrm{I}},S_{\mathrm{II}}^{0}}^{Z_{0}}[F(Z_{\tau})] \\ &
=  \sup_{S_{\mathrm{I}}} \mathbb{E}_{S_{\mathrm{I}},S_{\mathrm{II}}^{0}}^{Z_{0}}[\Phi(c_{\tau+1},Z_{\tau+1})] 
  \le \sup_{S_{\mathrm{I}}} \mathbb{E}_{S_{\mathrm{I}},S_{\mathrm{II}}^{0}}^{Z_{0}}[\Phi(c_{0},Z_{0})] \\ &
= u_{\epsilon}(Z_{0})
\end{align*}
by using optional stopping theorem.

Now we can also derive that $u_{\epsilon} \le u_{\mathrm{I}}$ by using a similar argument.
Then we get the desired result.
\end{proof}

\section{Long-time asymptotics}
In PDE theory, the study of asymptotic behavior of solutions of parabolic equations as time goes to infinity 
has drawn a lot of attention.
We will have a similar discussion for our value function $u_{\epsilon}$ when the boundary data $F$ 
does not depend on $t$ in $\Gamma_{\epsilon} \times (\epsilon^{2}, \infty)$.
The heuristic idea in this section can be summarized as follows. 
Assume that we start the game at $(x_{0},t_{0})$ for sufficiently large $t_{0}$. 
Then we can expect that the probability of the game ending in the initial boundary would be close to zero,
that is, the game finishes on the lateral boundary in most cases.  
Since we assumed that $F$ is independent of $t$ for $t > \epsilon^{2}$, 
we may consider this game as something like a time-independent game with the same boundary data. 
Thus, it is reasonable to guess that the value function of the time-dependent game converges that of the corresponding time-independent game.
We refer the reader to \cite{ber2019evolution} 
which contains a detailed discussion of asymptotic behaviors for value functions of evolution problems.
Moreover, long-time asymptotics for related PDEs can be found in
\cite{MR648452,MR1977429,MR2915863,pv2019equivalence}.

To observe the asymptotic behavior of value functions, we first need to obtain the following comparison principle. 
Since it can be shown in a straightforward manner by using the DPP \eqref{dppvar}, we omit the proof.
One can find similar results  in \cite[Theorem 5.3]{MR3161602}.

\begin{lemma} \label{mono}
Let $u$ and $v$ be functions satisfying \eqref{dppvar} with boundary data $F_{u}$ and $ F_{v} $, respectively.
Suppose that $ F_{u} \le F_{v}$ in $\Gamma_{\epsilon,T}$. 
Then,
$$ u \le v \qquad \textrm{in} \ \Omega_{\epsilon,T}. $$
\end{lemma}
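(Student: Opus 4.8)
The plan is to exploit the recursive structure of the operator $T$ defined in \eqref{deft}, exactly as in the uniqueness part of the existence theorem above. Since $F_u \le F_v$ on $\Gamma_{\epsilon,T}$, in particular $u = F_u \le F_v = v$ on $\overline{O}_{\epsilon,T}$, and on the initial slab $\Omega \times (0,\epsilon^2/2)$ one has $\delta \equiv 1$, so $u(\cdot,t) = F_u(\cdot,t) \le F_v(\cdot,t) = v(\cdot,t)$ there as well. Thus $u \le v$ on the time-slab $\{0 < t \le \epsilon^2/2\}$, which serves as the base case of an induction on time-slabs of width $\epsilon^2/2$.

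For the inductive step, suppose $u \le v$ on $\Omega_{\epsilon,T} \cap \{t \le k\epsilon^2/2\}$ for some $k \ge 1$. For $(x,t)$ with $k\epsilon^2/2 < t \le (k+1)\epsilon^2/2$, the DPP expresses $u(x,t)$ through values of $u$ at time $t - \epsilon^2/2 \le k\epsilon^2/2$ (where the inductive hypothesis applies) and through $F_u(x,t)$. Concretely, $u(x,t) = Tu(x,t)$ and $v(x,t) = Tv(x,t)$ with the respective boundary data; by the monotonicity of $\mathscr{A}_\epsilon$ in its function argument, the inductive hypothesis gives $\midrg_{\nu} \mathscr{A}_\epsilon u(x,t-\tfrac{\epsilon^2}{2};\nu) \le \midrg_{\nu} \mathscr{A}_\epsilon v(x,t-\tfrac{\epsilon^2}{2};\nu)$, and since $0 \le 1-\delta(x,t) \le 1$ and $\delta(x,t) F_u(x,t) \le \delta(x,t) F_v(x,t)$, we conclude $u(x,t) \le v(x,t)$ on this slab. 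Iterating over $k = 1, 2, \dots$ exhausts $\Omega_T$, and combined with the boundary inequality on $O_{\epsilon,T}$ this yields $u \le v$ on all of $\Omega_{\epsilon,T}$.

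Alternatively, one could give a probabilistic proof using the identification of $u$, $v$ with the game values from the previous theorem: for any pair of strategies, $\mathbb{E}^{Z_0}_{S_{\mathrm{I}},S_{\mathrm{II}}}[F_u(Z_\tau)] \le \mathbb{E}^{Z_0}_{S_{\mathrm{I}},S_{\mathrm{II}}}[F_v(Z_\tau)]$ pointwise on trajectories, and taking $\sup_{S_{\mathrm{I}}}\inf_{S_{\mathrm{II}}}$ preserves the inequality. I expect neither route to present a genuine obstacle — the lemma is essentially a monotonicity bookkeeping statement — but the mildly delicate point is making sure the induction is correctly anchored: one must verify that the very first slab is handled entirely by the boundary data (which holds because $\delta \equiv 1$ on $\Omega \times (0,\epsilon^2/2)$), so that no circularity arises. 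This is why the statement is asserted to follow "in a straightforward manner" and the proof is omitted in the paper.
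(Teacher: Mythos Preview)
Your approach is correct and is exactly what the paper has in mind: the proof is omitted there with the remark that it follows ``in a straightforward manner by using the DPP,'' pointing to the time-slab induction already used in the uniqueness argument. Your inductive step is right, and the probabilistic alternative you sketch is also valid.

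There is one slip in your base case. You assert that $\delta \equiv 1$ on $\Omega \times (0,\epsilon^{2}/2)$, hence $u = F_u$ there. This is false: from the definition of $\delta$ in Section~2, at a point $(x,t)$ with $t\in(0,\epsilon^{2}/2)$ but $\dist(x,\partial\Omega)\ge \epsilon$ one does \emph{not} get $\delta(x,t)=1$, so $u(x,t)$ is not simply $F_u(x,t)$. Fortunately the correct anchoring is already implicit in your inductive step: for $t\in(0,\epsilon^{2}/2)$ the DPP evaluates $u$ at time $t-\epsilon^{2}/2\le 0$, which lies in $O_{\epsilon,T}$, where $u=F_u\le F_v=v$ holds by hypothesis. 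So start the induction from the inequality on $O_{\epsilon,T}$ and run your inductive step from $k=0$; no separate treatment of the first interior slab is needed.
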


Now we state the main result of this section.
\begin{theorem} \label{stable}
Let $ \Omega$ be a bounded domain.
Consider functions  $\psi \in C(\Gamma_{\epsilon})$ and  $\varphi \in C(\Gamma_{\epsilon, T} \cap \{ t \le 0 \})$,
and define a function $F  \in C(\Omega_{\epsilon, T})$ as follows:
\begin{align} \label{stabof} F(x,t)=
\left\{ \begin{array}{ll}
\psi(x) & \textrm{ in $ \Gamma_{\epsilon} \times (\epsilon^{2},T]$,}\\
\varphi(x,\epsilon^{2}/2) + \frac{2t(\psi(x)-\varphi(x,\epsilon^{2}/2))}{\epsilon^{2}} & \textrm{ in $ \Gamma_{\epsilon} \times (\frac{\epsilon^{2}}{2} ,\epsilon^{2}] $,}\\
\varphi(x,t) & \textrm{in $ \Omega_{\epsilon} \times [-\frac{\epsilon^{2}}{2},  \frac{\epsilon^{2}}{2}]$.}\\
\end{array} \right. 
\end{align}
Assume that $u_{\epsilon}$ is the function satisfying \eqref{dppvar} with boundary data $F$.
Then we have
$$ \lim_{T \to \infty} u_{\epsilon} (x,T) = U_{\epsilon}(x)$$
where $ U_{\epsilon}$ is the function satisfying the following DPP
\begin{align} \begin{split} \label{dpplim}
& U_{\epsilon} (x) \\ & = (1- \overline{\delta}(x))\midrg_{\nu \in S^{n-1}} \bigg[ \alpha U_{\epsilon} ( x+ \epsilon \nu ) + \beta \kint_{B_{\epsilon}^{\nu} } U_{\epsilon} (\hspace{-0.15em}x+h) d \mathcal{L}^{n-1}(h) \hspace{-0.15em}  \bigg] 
\\ & \qquad  + \overline{\delta}(x) \psi(x)
 \end{split}
\end{align}
in $\Omega_{\epsilon} $
with boundary data $\psi $
where 
\begin{align*} \overline{\delta}(x): = \lim_{t \to \infty} \delta (x,t) 
=\left\{ \begin{array}{ll}
0 & \textrm{in $\Omega  \backslash  I_{\epsilon }, $}\\
 1- \dist(x,\partial \Omega)/\epsilon & \textrm{in $ I_{\epsilon}$, } \\
1 & \textrm{in $O_{\epsilon}$.}\\
\end{array} \right. 
\end{align*}
\end{theorem}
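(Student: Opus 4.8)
The plan is to show that $u_\epsilon(\cdot,T)$ is monotone in $T$ and bounded, hence convergent, and then to identify the limit with $U_\epsilon$ via the DPP. The first observation is that, because $F$ is independent of $t$ on $\Gamma_\epsilon\times(\epsilon^2,T]$ and equals $\psi$ there, translating the game in time by a multiple of $\epsilon^2/2$ gives a clean comparison. Concretely, fix $T_2 > T_1$ with $T_2 - T_1$ a multiple of $\epsilon^2/2$ (the relevant time-grid spacing), and compare $u_\epsilon$ on $\Omega_{T_2}$ with the time-shift $(x,t)\mapsto u_\epsilon(x,t-(T_2-T_1))$ restricted to the region where $t > T_2 - T_1$. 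Using Lemma~\ref{mono} on the slab $\Omega\times(T_2-T_1,T_2]$ with the shifted function playing the role of a solution on that slab (its ``initial'' data at time $t = T_2-T_1$ being $u_\epsilon(x,0)$-type boundary values, which by \eqref{stabof} are controlled by $\min_\Gamma\psi$ and $\max_\Gamma\psi$ respectively), I would deduce that $T\mapsto u_\epsilon(x,T)$ is, up to a one-sided error that one can absorb, monotone along the grid; more robustly, I would first prove the two-sided bound $\min_{\Gamma_\epsilon}\psi \le u_\epsilon \le \max_{\Gamma_\epsilon}\psi$ on the relevant region (immediate from Lemma~\ref{mono} comparing with constants, once $T$ is large enough that the affine-in-$t$ and $\varphi$-layers of \eqref{stabof} are irrelevant to reaching time $T$), and then run the shift-comparison to get monotonicity in $T$.

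Next I would set $\underline U_\epsilon(x) = \liminf_{T\to\infty} u_\epsilon(x,T)$ and $\overline U_\epsilon(x) = \limsup_{T\to\infty} u_\epsilon(x,T)$; by the monotonicity/boundedness these coincide and the limit, call it $U_\epsilon(x)$, exists pointwise. The main work is to pass to the limit in the DPP \eqref{dppvar}. Writing \eqref{dppvar} at the point $(x,T)$ and letting $T\to\infty$, I want to conclude
\begin{align*}
U_\epsilon(x) = (1-\overline\delta(x))\midrg_{\nu\in S^{n-1}}\Big[\alpha U_\epsilon(x+\epsilon\nu) + \beta\kint_{B_\epsilon^\nu}U_\epsilon(x+h)\,d\mathcal L^{n-1}(h)\Big] + \overline\delta(x)\psi(x),
\end{align*}
which is exactly \eqref{dpplim}. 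For the boundary term this is just $\delta(x,T)\to\overline\delta(x)$ and $F(x,T)=\psi(x)$ for large $T$. For the midrange term I need to interchange $\lim_{T\to\infty}$ with $\sup_\nu$ and $\inf_\nu$ and with the integral $\kint_{B_\epsilon^\nu}$. The integral interchange is dominated convergence (the integrand is uniformly bounded by $\max_\Gamma|\psi|$ and converges pointwise in $h$ by definition of $U_\epsilon$). The $\sup$/$\inf$ interchange is the delicate point and is where I expect the main obstacle: pointwise convergence of $\mathscr A_\epsilon u_\epsilon(x,T-\epsilon^2/2;\nu)$ to $\mathscr A_\epsilon U_\epsilon(x;\nu)$ for each $\nu$ does not by itself give convergence of the suprema. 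I would handle this by showing the convergence is uniform in $\nu\in S^{n-1}$: since each $u_\epsilon(\cdot,t)$ is continuous with a modulus that (from Lemma~\ref{mnpc} and Proposition~\ref{contilem}, tracking constants) can be taken independent of the time slice once we are far from the initial layers, the functions $\nu\mapsto \mathscr A_\epsilon u_\epsilon(x,T-\epsilon^2/2;\nu)$ are equicontinuous on the compact set $S^{n-1}$; combined with pointwise convergence this yields uniform convergence, hence convergence of $\sup_\nu$ and $\inf_\nu$.

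Finally, having shown $U_\epsilon$ satisfies \eqref{dpplim} with boundary data $\psi$, I would invoke (the elliptic analogue of) the uniqueness already used in this paper — the function satisfying \eqref{dpplim} with given continuous boundary data is unique, this being standard for this type of DPP on a bounded domain with the regularizing $\overline\delta$ present (cf.\ the references \cite{MR3471974,MR3161602}) — to conclude that $U_\epsilon$ is \emph{the} function in the statement, completing the proof. The only genuinely new ingredient beyond bookkeeping is the equicontinuity-in-$\nu$ argument needed to commute the limit past $\midrg_{\nu\in S^{n-1}}$; everything else reduces to Lemma~\ref{mono}, the structure of $F$ in \eqref{stabof}, and dominated convergence.
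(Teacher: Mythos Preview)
Your monotonicity step has a genuine gap. With arbitrary initial data $\varphi$, the map $T\mapsto u_\epsilon(x,T)$ is \emph{not} monotone along the grid: take $\psi\equiv 0$ and $\varphi$ large positive, so that $u_\epsilon(\cdot,T)$ must decay from something large toward $U_\epsilon\equiv 0$; by symmetry it cannot be increasing either. Your shift-comparison would require $\varphi(\cdot,0)\le u_\epsilon(\cdot,T_2-T_1)$ (or the reverse inequality), which you cannot know in advance. The two-sided bound $\min_{\Gamma_\epsilon}\psi\le u_\epsilon\le \max_{\Gamma_\epsilon}\psi$ you invoke is likewise false for general $\varphi$; the correct a priori bound involves $\min\{\inf\psi,\inf\varphi\}$ and $\max\{\sup\psi,\sup\varphi\}$, and these do not help the shift argument. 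There is also a smaller structural problem: on the initial layer $t\le\epsilon^2/2$ the regularizer $\delta(x,t)$ differs from $\overline\delta(x)$, so the time-shifted function does not satisfy the same DPP on the slab you want to compare on.

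The paper's remedy is precisely the missing idea: introduce barrier solutions $\underline u,\overline u$ with the same lateral data $\psi$ but with the initial data replaced by the constants $\underline\varphi=\min\{\inf\psi,\inf\varphi\}$ and $\overline\varphi=\max\{\sup\psi,\sup\varphi\}$. For \emph{these} the grid sequence $j\mapsto\underline u(x,t_0+j\epsilon^2/2)$ is genuinely increasing (a direct induction from $a_0=a_1=\underline\varphi\le a_2$), and Lemma~\ref{mono} gives $\underline u\le u_\epsilon\le\overline u$. One then shows both barriers converge to the same $U_\epsilon$ and squeezes. Your equicontinuity-in-$\nu$ argument for commuting $\lim_T$ with $\midrg_\nu$ is also not delivered by Proposition~\ref{contilem} and Lemma~\ref{mnpc} as stated: tracking constants through those results gives a modulus that can degrade by an additive Lipschitz term at each iteration in the strip $I_{\epsilon,T}$, so a time-uniform modulus is not immediate. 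The paper avoids this entirely: once the barriers are monotone, $\lim_j\sup_\nu=\sup_\nu\lim_j$ is automatic for increasing sequences, and the $\inf_\nu$ interchange follows by extracting a limit $\tilde\nu$ of the minimizers and applying monotone convergence.
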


\begin{remark}
We can find the existence and uniqueness of value functions under different setting in \cite{MR3161602},
which is related to the normalized $p$-Laplace operator for $p \ge 2$.
In that paper, the existence of measurable strategies is shown without regularization.
Thus, we do not have to consider a ``regularized function'' such as $\varphi(x,\epsilon^{2}/2) + 2t(\psi(x)-\varphi(x,\epsilon^{2}/2))/\epsilon^{2} $ in that case.
Meanwhile, for the time-independent version of our settings, results for these issues are shown in \cite{MR3471974}.
\end{remark}

\begin{proof}[Proof of Theorem \ref{stable}]
We will set some proper barrier functions $\underline{u}, \overline{u} $ such that
$$ \underline{u} \le  u_{\epsilon} \le \overline{u}$$
and show the coincidence for the limits of two barrier functions as $ t \to \infty$. 
In our proof, the uniqueness result for elliptic games is essential.
The motivation of this proof is from \cite[Proposition 3.3]{MR3623556}.

Let $\underline{\varphi} , \overline{\varphi}  $ be constants defined by 
$$ \underline{\varphi} = \min  \{ \inf_{\Gamma_{\epsilon}} \psi , \inf_{\Omega_{\epsilon}}\varphi \} \  \textrm{and} \ \overline{\varphi} =  \max \{ \sup_{\Gamma_{\epsilon}} \psi , \sup_{\Omega_{\epsilon}}\varphi \},$$
respectively. 
We consider $ \underline{u}$, $\overline{u} $ be functions satisfying \eqref{dppvar} with boundary data $\underline{F} $ and $\overline{F} $, where
\begin{align*} \underline{F}(x,t)=
\left\{ \begin{array}{ll}
\psi(x) & \textrm{  in $ \Gamma_{\epsilon} \times (\epsilon^{2},T]$ ,}\\
\underline{\varphi}  + 2t(\psi(x)-\underline{\varphi} )/\epsilon^{2} & \textrm{ in $ \Gamma_{\epsilon} \times (\frac{\epsilon^{2}}{2} ,\epsilon^{2}] $,}\\
\underline{\varphi}  & \textrm{in $ \Omega_{\epsilon} \times [-\frac{\epsilon^{2}}{2},  \frac{\epsilon^{2}}{2}]$,}\\
\end{array} \right. 
\end{align*}
and
\begin{align*} \overline{F}(x,t)=
\left\{ \begin{array}{ll}
\psi(x) & \textrm{ in $ \Gamma_{\epsilon} \times (\epsilon^{2},T] $,}\\
\overline{\varphi} + 2t(\psi(x)-\overline{\varphi})/\epsilon^{2} & \textrm{ in $ \Gamma_{\epsilon} \times (\frac{\epsilon^{2}}{2} ,\epsilon^{2}] $,}\\
\overline{\varphi}  & \textrm{in $ \Omega_{\epsilon} \times [-\frac{\epsilon^{2}}{2},  \frac{\epsilon^{2}}{2}]$,}\\
\end{array} \right. 
\end{align*}
respectively. 
Note that $ \underline{F}$ and $ \overline{F}$ are continuous in $\overline{\Gamma_{\epsilon,T}}$ and have constant initial data.

By Lemma \ref{mono}, we have $\underline{u} \le u_{\epsilon} \le \overline{u} $.
Thus it is sufficient to show that $\lim_{t \to \infty} \underline{u}(\cdot,t), \lim_{t \to \infty}\overline{u}(\cdot,t)$ exist and satisfy the limiting DPP
\eqref{dpplim}.
First we see that $$ ||\underline{u}||_{L^{\infty}(\Omega_{\epsilon,T})} \le  ||\underline{F}||_{L^{\infty}(\Gamma_{\epsilon,T})}   \ \textrm{and} \ ||\overline{u}||_{L^{\infty}(\Omega_{\epsilon,T})} \le  ||\overline{F}||_{L^{\infty}(\Gamma_{\epsilon,T})}   $$
by using the DPP of $ \underline{u}$ and $\overline{u} $. 
Thus, these functions are uniformly bounded.

Next, we prove monotonicity of sequences $\{ \underline{u}(x, t+ j\epsilon^{2}/2 ) \}_{j = 0}^{\infty} $ and $\{ \overline{u}(x, t+ j\epsilon^{2}/2) \}_{j = 0}^{\infty} $ for any $(x,t) \in \Omega_{\epsilon} \times (-\epsilon^{2}/2 ,0]$.
Without loss of generality, we only consider the case $\underline{u}$.
Let $(x_{0},t_{0}) $ be a point in $\Omega \times ( -\epsilon^{2}/2, 0] $ and
denote by $$a_{j} =  \underline{u}(x_{0}, t_{0}+ j\epsilon^{2}/2 )$$ for simplicity.
For any $(x_{0},t_{0}) \in \Omega \times ( -\epsilon^{2}/2, 0] $, we can derive that $$\underline{\psi}=  a_{0} = a_{1} \le a_{2}  $$ by direct calculation 
and 
\begin{align*}
a_{3} & = \bigg(1- \delta  \bigg(x_{0},t_{0}+\frac{3 \epsilon^{2}}{2}\bigg)\bigg)\midrg_{\nu \in S^{n-1}} \mathscr{A}_{\epsilon}u  ( x_{0}, t_{0}+  \epsilon^{2} ; \nu  ) \\ &
\qquad + \delta  \bigg(x_{0},t_{0}+\frac{3 \epsilon^{2}}{2}\bigg) F  \bigg(x_{0},t_{0}+\frac{3 \epsilon^{2}}{2}\bigg) \\ &
\ge  (1- \delta  (x_{0},t_{0}+  \epsilon^{2} ) )\midrg_{\nu \in S^{n-1}} \mathscr{A}_{\epsilon}u  \bigg( x_{0}, t_{0}+  \frac{\epsilon^{2}}{2} ; \nu  \bigg) \\ &
\qquad + \delta   (x_{0},t_{0}+ \epsilon^{2} ) F   (x_{0},t_{0}+  \epsilon^{2} ) =a_{2}
\end{align*} 
since $ \delta (x_{0},t_{0}+  \epsilon^{2} )=\delta (x_{0},t_{0}+  3\epsilon^{2}/2 ) $ and $ F (x_{0},t_{0}+  \epsilon^{2} ) \le F (x_{0},t_{0}+  3\epsilon^{2}/2 ) $.

Next, assume that $a_{k} \ge a_{k-1} $ for some $ k \ge 4 $.
Note that $F(x,t) = \psi (x) $ for $  x \in \Gamma_{\epsilon} $ and 
$$\delta \bigg(x_{0},t_{0}+\frac{k\epsilon^{2}}{2}\bigg) = \delta \bigg(x_{0},t_{0}+\frac{(k-1)\epsilon^{2}}{2}\bigg) $$  in this case.
Then, we see
\begin{align*}
a_{k+1}  & = \bigg(1- \delta \bigg(x_{0},t_{0}+\frac{(k+1)\epsilon^{2}}{2}\bigg)\bigg)\midrg_{\nu \in S^{n-1}} \mathscr{A}_{\epsilon}u \bigg( x_{0}, t_{0}+ \frac{k\epsilon^{2}}{2}; \nu \bigg) \\ &
\qquad + \delta \bigg(x_{0},t_{0}+\frac{(k+1)\epsilon^{2}}{2} \bigg) \psi(x_{0}) \\ &
\ge \bigg(1- \delta \bigg(x_{0},t_{0}+\frac{k\epsilon^{2}}{2}\bigg)\bigg)\midrg_{\nu \in S^{n-1}} \mathscr{A}_{\epsilon}u \bigg( x_{0}, t_{0}+ \frac{(k-1)\epsilon^{2}}{2}; \nu \bigg) \\ &
\qquad + \delta \bigg(x_{0},t_{0}+\frac{k\epsilon^{2}}{2} \bigg) \psi(x_{0}) = a_{k}.
\end{align*}
Therefore, $\{ a_{j} \} $ is increasing for any $(x_{0},t_{0}) \in \Omega \times ( -\epsilon^{2}/2, 0]$. 
It is also possible to obtain $  \{ \overline{u}(x, t+ j\epsilon^{2}/2) \}$ is decreasing by using similar arguments.
Therefore, we obtain  $\{ \underline{u}(x, t+ j\epsilon^{2}/2 ) \}  $ and $\{ \overline{u}(x, t+ j\epsilon^{2}/2) \}  $ converges for any $(x,t) \in \Omega_{\epsilon} \times (-\epsilon^{2}/2 ,0]$ by applying the monotone convergence theorem.

Now we show that $U_{\epsilon}$ satisfies the DPP \eqref{dpplim}.
Fix $- \epsilon^{2}/2 \le t_{1} < 0 $ arbitrary and
write $$ \underline{U_{t_{1}}}(x) = \lim_{j \to \infty} \underline{u}(x,t_{1}+ j\epsilon^{2}/2)$$
for $x \in \Omega $.
By definition of $\underline{u} $, we see that
\begin{align*}
\underline{U_{t_{1}}}(x)    =(1-\overline{\delta}(x))\lim_{j \to \infty} \bigg[ \midrg_{\nu \in S^{n-1}} \mathscr{A}_{\epsilon}\underline{u} \bigg( x,   t_{1}+
 \frac{j \epsilon^{2}}{2}; \nu \bigg) \bigg]  
 + \overline{\delta}(x) \psi(x).
\end{align*}
Therefore, it is sufficient to show that
\begin{align} \label{ac_sup} \lim_{j \to \infty} \sup_{\nu \in S^{n-1}} \mathscr{A}_{\epsilon}\underline{u} \bigg( x,  t_{1}+ \frac{j \epsilon^{2}}{2}; \nu \bigg) = \sup_{\nu \in S^{n-1}} \tilde{\mathscr{A}}_{\epsilon}\underline{U_{t_{1}}} ( x; \nu )
\end{align}
and
\begin{align} \label{ac_inf} \lim_{j \to \infty} \inf_{\nu \in S^{n-1}} \mathscr{A}_{\epsilon}\underline{u} \bigg( x,  t_{1}+ \frac{j \epsilon^{2}}{2}; \nu \bigg) = \inf_{\nu \in S^{n-1}} \tilde{\mathscr{A}}_{\epsilon}\underline{U_{t_{1}}} ( x; \nu )
\end{align}
where 
\begin{align} \label{tildea} \tilde{\mathscr{A}}_{\epsilon}v (x; \nu)= \alpha v(x+ \epsilon \nu) + \beta \kint_{B_{\epsilon}^{\nu} }v(x+ h) d \mathcal{L}^{n-1}(h) . 
\end{align}
These equalities can be derived by the argument in the proof of \cite[Proposition 3.3]{MR3623556}.
First, we get \eqref{ac_sup} from monotonicity of  $\{  \underline{u} ( x,  t_{1}+ j \epsilon^{2}/2  ) \} $.
On the other hand, by means of the monotonicity of $\{  \underline{u} ( x,  t_{1}+ j \epsilon^{2}/2  ) \} $ and continuity of $ \mathscr{A}_{\epsilon}\underline{u} (x, t; \cdot)$, we can show the existence of a vector $\tilde{\nu} \in  S^{n-1}$ satisfying
$$ \mathscr{A}_{\epsilon}\underline{u} \bigg( x,  t_{1}+ \frac{j \epsilon^{2}}{2}; \tilde{\nu} \bigg) \le \lim_{j \to \infty} \inf_{\nu \in S^{n-1}} \tilde{\mathscr{A}}_{\epsilon}\underline{U_{t_{1}}} ( x; \nu ) \qquad \textrm{for any} \ j \ge 0 . $$
Now \eqref{ac_inf} is obtained by the monotone convergence theorem.
Thus, we deduce that
$ \underline{U_{t_{1}}}$ satisfies the DPP \eqref{dpplim} for every $- \epsilon^{2}/2 \le t_{1} < 0  $.
By uniqueness of solutions to \eqref{dpplim}, \cite[Theorem 3.7]{MR3623556}, we can deduce that 
$$ \lim_{t \to \infty} \underline{u}(x,t )=  U_{\epsilon}(x) .  $$

We can prove the same result for $\overline{u}$ by repeating the above steps. 
Combining these results with $\underline{u} \le u_{\epsilon} \le \overline{u}$, we get
$$\lim_{t \to \infty} u_{\epsilon}(x,t )=  U_{\epsilon}(x) $$
and then we can finish the proof.
\end{proof}


We finish this section by proving a corollary.
One can apply the above theorem with the interior regularity result for $ u_{\epsilon} $, \cite[Theorem 5.2]{MR4153524}. 
This coincides with the result for elliptic case, \cite[Theorem 1.1]{MR4125101}.

\begin{corollary} 
Let $\bar{B}_{2r} \subset \Omega \backslash I_{\epsilon}$  and $\epsilon > 0 $ be small. 
Suppose that $U_{\epsilon}$ satisfies \eqref{dpplim}.
Then for any $x, y \in B_{r}(0)$, 
$$ |U_{\epsilon} (x) - U_{\epsilon} (y) | \le C  ( |x-y|  + \epsilon), $$ 
where $C>0$ is a constant which only depends on $r,n $ and $||\psi||_{L^{\infty}(\Gamma_{\epsilon})}$.
\end{corollary}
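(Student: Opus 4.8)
The plan is to read off the estimate for $U_{\epsilon}$ from the interior regularity of the time-dependent game values, transplanted through the long-time limit of Theorem \ref{stable}.

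First I would observe that, by uniqueness of solutions to \eqref{dpplim} (\cite[Theorem 3.7]{MR3623556}), the given $U_{\epsilon}$ coincides with $\lim_{T\to\infty}u_{\epsilon}(\cdot,T)$, where $u_{\epsilon}$ is the value function with boundary data $F$ built from $\psi$ and an initial datum $\varphi$ as in \eqref{stabof}; by Theorem \ref{stable} this holds for \emph{every} admissible $\varphi$, because \eqref{dpplim} --- and hence $U_{\epsilon}$ --- does not see $\varphi$ at all. I would exploit this freedom and choose $\varphi$ so that $||\varphi||_{L^{\infty}}$ is controlled by $||\psi||_{L^{\infty}(\Gamma_{\epsilon})}$ (for instance a bounded continuous extension of $\psi$); then a direct inspection of \eqref{stabof} gives $||F||_{L^{\infty}(\Omega_{\epsilon,T})}\le C_{0}\,||\psi||_{L^{\infty}(\Gamma_{\epsilon})}$ for an absolute constant $C_{0}$, the only nontrivial contribution coming from the linear-in-$t$ transition layer $\Gamma_{\epsilon}\times(\epsilon^{2}/2,\epsilon^{2}]$.

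Next, fixing $x,y\in B_{r}(0)$ and $T$ large, I would apply the interior estimate \cite[Theorem 5.2]{MR4153524} to $u_{\epsilon}$. Since $\bar{B}_{2r}(0)\subset\Omega\backslash I_{\epsilon}$ and $\epsilon$ is small, the backward parabolic cylinder $B_{2r}(0)\times(T-(2r)^{2},T]$ lies inside $\Omega_{T}\backslash\Gamma_{\epsilon,T}$ once $T>(2r)^{2}+\epsilon^{2}$, so that estimate is available there; evaluating it with both time arguments equal to $T$ kills the temporal term and yields $|u_{\epsilon}(x,T)-u_{\epsilon}(y,T)|\le C(|x-y|+\epsilon)$, where $C$ depends only on $r,n$ and $||F||_{L^{\infty}}$, hence only on $r,n$ and $||\psi||_{L^{\infty}(\Gamma_{\epsilon})}$, and is independent of $T$. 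Letting $T\to\infty$ and invoking Theorem \ref{stable} gives $|U_{\epsilon}(x)-U_{\epsilon}(y)|\le C(|x-y|+\epsilon)$, which is the claim. One could also bypass the parabolic detour and quote the elliptic interior estimate \cite[Theorem 1.1]{MR4125101} directly for the solution $U_{\epsilon}$ of \eqref{dpplim}.

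The one point that deserves care is the dependence of the constant: a careless application of \cite[Theorem 5.2]{MR4153524} would leave $C$ depending on $||F||_{L^{\infty}}$, and hence a priori also on $||\varphi||_{L^{\infty}}$, whereas the statement permits dependence only on $||\psi||_{L^{\infty}(\Gamma_{\epsilon})}$; the resolution is exactly the observation that $\varphi$ can be chosen freely without changing $U_{\epsilon}$. The remaining ingredients --- that the cylinder sits in the interior for all large $T$, and that the pointwise convergence in Theorem \ref{stable} passes to the inequality --- are routine.
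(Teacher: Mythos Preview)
Your proposal is correct and follows essentially the same route as the paper: both arguments feed $U_{\epsilon}$ through Theorem~\ref{stable} as the long-time limit of $u_{\epsilon}(\cdot,T)$, apply the interior Lipschitz estimate \cite[Theorem 5.2]{MR4153524} at a fixed large time, and then pass to the limit (the paper uses an equivalent $\eta$-argument), while the dependence of the constant on $||\psi||_{L^{\infty}(\Gamma_{\epsilon})}$ rather than $||F||_{L^{\infty}}$ is handled in both cases by exploiting the freedom in the choice of initial datum $\varphi$. Your treatment of this last point and of the placement of the parabolic cylinder is, if anything, more explicit than the paper's.
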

\begin{proof}
Let $r>0$ with $\bar{B}_{2r} \subset \Omega \backslash I_{\epsilon}$ and  $x, y  \in B_{r}(0)$.
By Theorem \ref{stable}, for any $\eta >0$, we can find some large $t>0$ such that
$$ |u_{\epsilon}(x,t) - U_{\epsilon}(x)| < \eta \quad \textrm{and} \quad |u_{\epsilon}(y,t) - U_{\epsilon}(y)| < \eta ,$$
where $u_{\epsilon}$ is a function satisfying \eqref{dppvar}.
And by  \cite[Theorem 5.2]{MR4153524}, we know that
$$ |u_{\epsilon} (x,t) - u_{\epsilon} (y,t) | \le C( |x-y|+ \epsilon), $$  
where $C$ is a constant depending on $r, n $ and $ ||F||_{L^{\infty}(\Gamma_{\epsilon,T})}$.
(Here, $F$ is a boundary data as in Theorem \ref{stable})

Then we have
\begin{align*}
| U_{\epsilon}(x) -  U_{\epsilon}(y) | & \le | U_{\epsilon}(x)-u_{\epsilon}(x,t)| + 
|u_{\epsilon}(x,t) - u_{\epsilon}(y,t) | + |u_{\epsilon}(y,t) -U_{\epsilon}(y)|  \\
& < C( |x-y|+ \epsilon) + 2\eta.
\end{align*}
Since we can choose $\eta$ arbitrarily small, we obtain
$$  |u_{\epsilon} (x,t) - u_{\epsilon} (y,t) | \le C( |x-y|+ \epsilon) $$
for some $C =  C(n,p, \Omega,  ||\psi||_{L^{\infty}(\Gamma_{\epsilon})})>0$
since we can estimate $$ ||F||_{L^{\infty}(\Gamma_{\epsilon,T})} \le  ||\psi||_{L^{\infty}(\Gamma_{\epsilon})} $$ by choosing proper boundary data $F$.
\end{proof}

\section{Regularity near the boundary}
We consider regularity for functions $u_{\epsilon}$ satisfying \eqref{dppvar} near the boundary in this section. 
This result is interesting in itself,
but it is also necessary to observe the connection between value functions and PDEs
(see the next section).

First, we introduce a boundary regularity condition for the domain $\Omega$.
\begin{definition}[Exterior sphere condition] \label{exspc} We say that a domain $ \Omega$ satisfies
an exterior sphere condition if
for any $y \in \partial \Omega$, there exists $ B_{\delta}(z) \subset \mathbb{R}^{n} \backslash \Omega $ with $\delta > 0 $ such that $y \in  \partial B_{\delta}(z)$.
\end{definition}
Throughout this section, we always assume that $\Omega$ satisfies Definition \ref{exspc} and 
$\Omega \subset B_{R}(z) $ for some $R>0$.

Meanwhile, we also assume that the boundary data $F$ satisfies
\begin{align} \label{bdlip}
|F(x,t)-F(y,s)| \le L(|x-y|+|t-s|^{1/2})
\end{align}
for any $(x,t),(y,s) \in \Gamma_{\epsilon,T}$ and some $L>0$.

Let $y \in \partial \Omega$ and take $z \in \mathbb{R}^{n} \backslash \Omega $ with $ B_{\delta}(z) \subset \mathbb{R}^{n} \backslash \Omega $ and $y \in \partial B_{\delta}(z)$. 
We consider a time-independent tug-of-war game.  
Assume that the rules to move the token are the same as that of the original game,
but of course, we do not consider the time parameter $t$ in this case.
We also assume that the token cannot escape outside $\overline{B}_{R}(z)$
and the game ends only if the token is located in $\overline{B}_{\delta}(z)$. 
Now we fix specific strategies for both players.
For each $k=0, 1, \dots$,
assume that Player I and II takes the vector $\nu_{k}^{\mathrm{I}}=-\frac{x_{k}-z}{|x_{k}-z|} $ and $\nu_{k}^{\mathrm{II}}=\frac{x_{k}-z}{|x_{k}-z|} $, respectively. 
We write these strategies for Player I, II as $S_{\mathrm{I}}^{z}$ and $S_{\mathrm{II}}^{z}$.
On the other hand, we need to define strategies and random processes when $B_{\epsilon}(x_{k}) \backslash B_{R}(z)\neq \varnothing $.   
In this case, $x_{k+1}$ is defined by $x_{k}+\epsilon \nu_{k}^{\mathrm{I}} $
if Player I wins coin toss twice and
 $$x_{k}+ \dist (x_{k}, \partial B_{R}(z))   \nu_{k}^{\mathrm{II}}=z + R\frac{x_{k}-z}{|x_{k}-z|}$$
if Player II wins coin toss twice.
When random walk occurs, $x_{k+1}$ is chosen uniformly in $B_{\epsilon}^{\nu_{k}^{I}}(x) \cap B_{R}(z)$. 
 
We denote by $$ \tau^{\ast} = \inf \{ k : x_{k} \in \overline{B}_{\delta}(z)  \}.$$
The following lemma gives an estimate for the the stopping time $\tau^{\ast}$.  
\begin{lemma} \label{bafn} 
Under the setting as above, we have
$$ \mathbb{E}_{S_{\mathrm{I}}^{z},S_{\mathrm{II}}^{z}}^{x_{0}}[\tau^{\ast}] \le    \frac{C(n, \alpha, R/\delta)( \dist(\partial B_{\delta}(y),x_{0})+o(1))}{\epsilon^{2}}   $$
for any $ x_{0}  \in \Omega  \subset  B_{R}(z)\backslash \overline{B}_{\delta}(z) $.
Here $o(1) \to 0$ as $\epsilon \to 0$ and $\ceil{x}$ means the least integer greater than or equal to $x \in \mathbb{R}$.

\end{lemma}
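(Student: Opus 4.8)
The plan is to exploit that the fixed strategies $S_{\mathrm{I}}^{z},S_{\mathrm{II}}^{z}$ are purely radial about $z$, so that the scalar process $r_{k}:=|x_{k}-z|$ is itself a Markov chain, and to bound $\tau^{\ast}$ by a radial supermartingale. Writing $b:=\beta\tfrac{n-1}{n+1}$, I would first fix a radial barrier $g(x)=\phi(|x-z|)$ with $\phi(r)=B(\delta^{-\gamma}-r^{-\gamma})$, where $\gamma=\gamma(n,\alpha)>0$ is chosen so that $\alpha(\gamma+1)-b=1$ (possible for every $\alpha\in(0,1)$; explicitly $\gamma=\tfrac{2n\beta}{(n+1)\alpha}$) and $B=B(n,\alpha,R)>0$ is chosen large enough that
\[
\alpha\,\phi''(r)+b\,\frac{\phi'(r)}{r}=-B\gamma\,r^{-\gamma-2}\le-2\qquad\text{for all }r\in[\delta,R].
\]
Thus $\phi$ is increasing, strictly concave, smooth on $(0,\infty)$, and $\phi(\delta)=0$; only these properties are used below.

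Next I would compute the one-step conditional expectation under these strategies. On $\{\tau^{\ast}>k\}$ (so $r:=r_{k}>\delta$), with probability $\tfrac12$ Player I wins the coin toss and then moves the token to distance $r-\epsilon$ with conditional probability $\alpha$ or performs a uniform step in the hyperplane orthogonal to the radial direction with conditional probability $\beta$; Player II's turn is symmetric, yielding distance $r+\epsilon$ or the same tangential step. Such a tangential step moves the radius to $\sqrt{r^{2}+|h|^{2}}$ with $h$ uniform on $B_{\epsilon}^{\nu_{k}^{\mathrm{I}}}$, so when $B_{\epsilon}(x_{k})\subset B_{R}(z)$,
\[
\mathbb{E}\big[g(x_{k+1})\mid x_{0},\dots,x_{k}\big]=\tfrac{\alpha}{2}\big(\phi(r-\epsilon)+\phi(r+\epsilon)\big)+\beta\kint_{B_{\epsilon}^{\nu_{k}^{\mathrm{I}}}}\phi\big(\sqrt{r^{2}+|h|^{2}}\big)\,d\mathcal{L}^{n-1}(h).
\]
A Taylor expansion (legitimate since $\phi$ has bounded derivatives on $[\delta/2,2R]$, using $\kint_{B_{\epsilon}^{\nu}}|h|^{2}\,d\mathcal{L}^{n-1}(h)=\tfrac{n-1}{n+1}\epsilon^{2}$) then gives
\[
\mathbb{E}\big[g(x_{k+1})\mid\mathcal{F}_{k}\big]=\phi(r)+\tfrac{\epsilon^{2}}{2}\Big(\alpha\phi''(r)+b\,\tfrac{\phi'(r)}{r}\Big)+o(\epsilon^{2})\le\phi(r)-\epsilon^{2}+o(\epsilon^{2}),
\]
with $o(\epsilon^{2})$ uniform in $r\in[\delta,R]$ as $\epsilon\to0$. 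When $B_{\epsilon}(x_{k})\not\subset B_{R}(z)$, Player II's pull-away move is truncated at $\partial B_{R}(z)$ (new radius $\min(r+\epsilon,R)\le r+\epsilon$) and the tangential step is confined to $B_{\epsilon}^{\nu_{k}^{\mathrm{I}}}(x_{k})\cap B_{R}(z)$, so that only the smaller values of $|h|$ remain; since $\phi$ is increasing, each such modification only decreases the left-hand side, and the bound persists. Hence $\mathbb{E}[g(x_{k+1})\mid\mathcal{F}_{k}]\le g(x_{k})-\epsilon^{2}+o(\epsilon^{2})$ on $\{\tau^{\ast}>k\}$.

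For the conclusion I would fix $\epsilon$ small enough that the error above does not exceed $\tfrac12\epsilon^{2}$, so that $M_{k}:=g(x_{k\wedge\tau^{\ast}})+\tfrac12\epsilon^{2}(k\wedge\tau^{\ast})$ is a supermartingale; it is bounded below, since the token overshoots $\overline{B}_{\delta}(z)$ by at most one $\epsilon$-step, whence $g(x_{k\wedge\tau^{\ast}})\ge\phi(\delta-\epsilon)\ge-C\epsilon$ once $\epsilon<\delta/2$. Therefore $\mathbb{E}[M_{k}]\le M_{0}=\phi(r_{0})$, and letting $k\to\infty$ by monotone convergence,
\[
\tfrac12\,\epsilon^{2}\,\mathbb{E}_{S_{\mathrm{I}}^{z},S_{\mathrm{II}}^{z}}^{x_{0}}[\tau^{\ast}]\le\phi(r_{0})+C\epsilon.
\]
Since $\phi$ is concave with $\phi(\delta)=0$, $\phi(r_{0})\le\phi'(\delta)(r_{0}-\delta)=\phi'(\delta)\,\dist(x_{0},\partial B_{\delta}(z))$, and $\phi'(\delta)=2R(R/\delta)^{\gamma+1}$ has the asserted dependence on $n$, $\alpha$ and $R/\delta$. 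Rearranging and absorbing $C\epsilon$ together with the suppressed correction into a single $o(1)$ as $\epsilon\to0$ gives the claimed bound.

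The crux is the barrier construction together with the claim that the outer sphere $\partial B_{R}(z)$ does no harm: one must pick $\gamma$ and $B$ so that $\alpha\phi''+b\,\phi'/r\le-2$ holds on the whole annulus $\delta\le r\le R$ simultaneously for all admissible $\alpha\in(0,1)$, and then argue, purely via monotonicity of $\phi$, that truncating Player II's pull-away move and confining the tangential step to $B_{R}(z)$ can only decrease the relevant expectation. The Taylor expansion, the optional-stopping argument, and the bookkeeping of the $o(1)$ terms are then routine.
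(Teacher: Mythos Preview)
Your proof is correct and follows the same overall strategy as the paper: build a radial barrier $\phi$ so that $\phi(|x_{k}-z|)+c_{0}k\epsilon^{2}$ is a supermartingale under the fixed radial strategies, handle the truncation near $\partial B_{R}(z)$ by monotonicity of $\phi$, apply optional stopping, and then bound $\phi(r_{0})$ linearly in $r_{0}-\delta$. The only substantive difference is the choice of barrier. The paper takes $\phi=w$ to be the explicit solution of the radial ODE $\tfrac{\alpha}{2}w''+\tfrac{b}{2r}w'=-1$ with $w(\delta)=0$ and the Neumann condition $w'(R+\epsilon)=0$, which yields a quadratic-plus-power (or quadratic-plus-log when $\alpha=\tfrac{n-1}{2n}$) function and makes the link to $\Delta_{p}^{N}v=-2(p+n)$ transparent; it then bounds $w$ linearly via the explicit formula for $w'$. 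You instead take the simpler barrier $\phi(r)=B(\delta^{-\gamma}-r^{-\gamma})$ with $\gamma=\tfrac{2n\beta}{(n+1)\alpha}$, which avoids the case split and the Neumann condition altogether, and you extract the linear bound directly from concavity. Your treatment of the outer sphere (Player~II's truncated pull-away and the restricted tangential average both decrease the expectation because $\phi$ is increasing and the tangential intersection with $B_{R}(z)$ is a smaller centered $(n-1)$-ball) is exactly the mechanism the paper uses as well. Either barrier works; yours is a bit more elementary, while the paper's highlights the PDE interpretation.
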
 

\begin{proof}
Set $g_{\epsilon}(x) =  \mathbb{E}_{S_{\mathrm{I}}^{z},S_{\mathrm{II}}^{z}}^{x}[\tau^{\ast}].$
Then we observe that $g_{\epsilon}$ satisfies the following DPP
\begin{align*}
 g_{\epsilon}(x) =& \frac{1}{2}
\bigg[ \bigg\{ \alpha g_{\epsilon} (x+ \rho_{x}\epsilon \nu_{x}) + \beta \kint_{B_{\epsilon}^{\nu_{x}}(x) \cap B_{R}(z)}g_{\epsilon} (y) d \mathcal{L}^{n-1}(y) \bigg\}   \\ &  \qquad
+\bigg\{ \alpha g_{\epsilon} (x- \epsilon \nu_{x})+\beta \kint_{B_{\epsilon}^{\nu_{x}}(x) \cap B_{R}(z)} g_{\epsilon}(y) d \mathcal{L}^{n-1}(y)  \bigg\} \bigg]+1,
\end{align*}
where $\rho_{x} = \min \{ 1, \epsilon^{-1}\dist(x, \partial B_{R}(z)) \}$ and $\nu_{x} = (x-z)/|x-z|$.
Note that $\rho_{x}=1$ for any $x \in  B_{R-\epsilon}(z)\backslash \overline{B}_{\delta}(z) $.
Next we define
$ v_{\epsilon}= \epsilon^{2}g_{\epsilon} $. 
It is straightforward that
\begin{align} \label{vdpp} \begin{split} v_{\epsilon}(x)=\frac{\alpha}{2} \big(v_{\epsilon}&(x+\rho_{x}\epsilon\nu_{x})+v_{\epsilon}(x-\epsilon\nu_{x})\big)
\\ & + \beta \kint_{B_{\epsilon}^{\nu_{x}}(x) \cap B_{R}(z)} v_{\epsilon}(y) d \mathcal{L}^{n-1}(y)  +\epsilon^{2}. 
\end{split}
\end{align}  
From the definition of $v_{\epsilon}$ and \eqref{vdpp}, we observe that the function $v_{\epsilon}$ is rotationally symmetric, that is, $v_{\epsilon}$ is a function of $r = |x-z|$.
If we denote by $v_{\epsilon}(x)=V(r)$, the DPP \eqref{vdpp} can be represented by 
\begin{align} \label{vdpp2} \begin{split}
V(r) = \frac{\alpha}{2} &\big(V(r+\rho_{r}\epsilon)+V(r-\epsilon)\big)
\\ & + \beta \kint_{B_{\epsilon}^{\nu_{x}}(x) \cap B_{R}(z)} V(|y-z|) d \mathcal{L}^{n-1}(y)  +\epsilon^{2},
\end{split}
\end{align}
where $\rho_{r}=\min \{ 1, \epsilon^{-1}(R-r) \}$.

Now we can deduce that \eqref{vdpp2} has a connection to the following problem
\begin{align*} 
\left\{ \begin{array}{ll}
\frac{1-\alpha}{2r} \frac{n-1}{n+1}w'+\frac{\alpha}{2}w''= -1 & \textrm{when $ r \in (\delta, R+\epsilon), $}\\
w(\delta)=0, \\
w'(R+\epsilon)=0\\
\end{array} \right. 
\end{align*}
by using Taylor expansion.
Note that if we set $v(x)=w(|x|)$,
$$\frac{1-\alpha}{2r} \frac{n-1}{n+1}w'+\frac{\alpha}{2}w''= -1$$ can be transformed by
$$ \Delta_{p}^{N}v = -2(p+n),$$
where $p=(1+n\alpha)/(1-\alpha)$ (for the definition of $ \Delta_{p}^{N}$, see the next section).
On the other hand, we have 
\begin{align*}w(r)= 
\left\{ \begin{array}{ll}
-\frac{n+1}{2\alpha +n-1}r^{2}+c_{1}r^{\frac{2\alpha n-n+1}{(n+1)\alpha}}+c_{2} & \textrm{when $ \alpha \neq \frac{n-1}{2n}, $}\\
-\frac{n}{n-1}r^{2}+c_{1}\log r+c_{2} & \textrm{when $ \alpha = \frac{n-1}{2n}  $}\\
\end{array} \right. 
\end{align*}
by direct calculation. 
Here 
\begin{align*}c_{1}= 
\left\{ \begin{array}{ll}
\frac{2(n+1)^{2}\alpha}{(2\alpha +n-1)(2\alpha n -n+1)}(R+\epsilon)^{\frac{n+2\alpha-1}{(n+1)\alpha}} & \textrm{when $ \alpha \neq \frac{n-1}{2n}, $}\\
\frac{2n}{n-1}(R+\epsilon)^{2} & \textrm{when $\alpha = \frac{n-1}{2n} $}\\
\end{array} \right. 
\end{align*} is positive if $\alpha \ge  \frac{n-1}{2n}$ and negative otherwise.
We extend this function to the interval $(\delta -\epsilon, R+\epsilon]$. 

Observe that 
\begin{align*} 
\frac{\alpha}{2} & \big(w(r+\epsilon)+w(r-\epsilon)\big)  + \beta\kint_{B_{\epsilon}^{\nu_{x}}(x) } w(|y-z|) d \mathcal{L}^{n-1}(y) \\
& = w(r) -  \frac{n+1}{2\alpha+n-1} \bigg( \alpha + \frac{n-1}{n+1}\beta \bigg)   \epsilon^{2}  +o(\epsilon^{2}) \\ 
& \le  w(r) - \bigg[  \frac{n+1}{2\alpha+n-1} \bigg( \alpha + \frac{n-1}{n+1}\beta \bigg)- \eta  \bigg] \epsilon^{2}
\end{align*}
for some $\eta>0$ when $\alpha \neq  \frac{n-1}{2n}$ (we can also obtain a similar estimate if $\alpha= \frac{n-1}{2n}$).  
Set $$c_{0}:=\frac{n+1}{2\alpha+n-1} \bigg( \alpha + \frac{n-1}{n+1}\beta \bigg)- \eta  >0.$$
Then we have 
\begin{align*}
\mathbb{E}_{S_{\mathrm{I}}^{z},S_{\mathrm{II}}^{z}}^{x_{0}} &[v(x_{k})+ kc_{0}\epsilon^{2}|x_{0}, \dots, x_{k-1}] \\ &
 = \alpha   \big(v(x_{k-1}+\epsilon \nu_{x_{k-1}})+v(x_{k-1}-\epsilon\nu_{x_{k-1}})\big) \\ 
& \qquad \qquad  + \beta\kint_{B_{\epsilon}^{\nu_{x_{k-1}}}(x_{k-1}) } v(y-z ) d \mathcal{L}^{n-1}(y)  +kc_{0}\epsilon^{2}
\\ & \le  v(x_{k-1})+ (k-1)c_{0}\epsilon^{2},
\end{align*}
if $B_{\epsilon}(x_{k-1}) \subset B_{R}(z) \backslash \overline{B}_{\delta-\epsilon}(z) $.
The same estimate can be derived in the case $B_{\epsilon}(x_{k-1}) \backslash B_{R}(z)\neq \varnothing $ since $ w$ is an increasing function of $r$ and it implies
$$ v(x+\rho_{x}\epsilon\nu_{x}) \le v(x+\epsilon\nu_{x}) $$
and
$$ \kint_{B_{\epsilon}^{\nu_{x}}(x) \cap B_{R}(y) } v( y-z ) d \mathcal{L}^{n-1}(y) \le \kint_{B_{\epsilon}^{\nu_{x}}(x) } v( y-z ) d \mathcal{L}^{n-1}(y) .$$  

Now we see that $v(x_{k})+ kc_{0}\epsilon^{2} $ is a supermartingale.
By the optional stopping theorem, we have
\begin{align} \label{ostapp} \mathbb{E}_{S_{\mathrm{I}}^{z},S_{\mathrm{II}}^{z}}^{x_{0}}[v(x_{\tau^{\ast} \wedge k })+(\tau^{\ast} \wedge k)c_{0}\epsilon^{2}] \le v(x_{0}).\end{align}
We also check that 
$$ 0 \le -\mathbb{E}_{S_{\mathrm{I}}^{z},S_{\mathrm{II}}^{z}}^{x_{0}}[v(x_{\tau^{\ast}})] \le o(1) ,$$
since $x_{\tau^{\ast}} \in \overline{B}_{\delta}(z) \backslash \overline{B}_{\delta-\epsilon}(z)$.

Meanwhile, it can be also observed that $w'>0$ is a decreasing function in the interval $(\delta, R+\epsilon)$
and thus 
$$ w' \le \frac{2(n+1)}{2\alpha+n-1}\delta \bigg[ \bigg( \frac{R+\epsilon}{\delta} \bigg)^{\frac{n+2\alpha-1}{(n+1)\alpha}} -1 \bigg] 
\qquad \textrm{in} \  (\delta, R+\epsilon).$$
From the above estimate, we have
\begin{align} \label{linestw} 0 \le w(x_{0}) \le C(n, \alpha, R/\delta) \dist( \partial B_{\delta}(y), x_{0} ). 
\end{align}
Finally, combining \eqref{linestw} with \eqref{ostapp}  and passing to a limit with $k$, we have
\begin{align*}
c_{0}\epsilon^{2}\mathbb{E}_{S_{\mathrm{I}}^{z},S_{\mathrm{II}}^{z}}^{x_{0}}[ \tau^{\ast} ] & \le w(x_{0}) -\mathbb{E}_{S_{\mathrm{I}}^{z},S_{\mathrm{II}}^{z}}^{x_{0}}[w(x_{\tau^{\ast}})] \\ &
\le C(n, \alpha, R/\delta)  \dist( \partial B_{\delta}(y), x_{0} )+ o(1)
\end{align*}
and it gives our desired estimate.
\end{proof}
\color{black}

By means of Lemma \ref{bafn}, we can deduce following boundary regularity results. 
First, we give an estimate for $u_{\epsilon}$ on the lateral boundary.
\begin{lemma} \label{latbest}
Assume that $\Omega$ satisfies the exterior sphere condition and $F$ satisfies \eqref{bdlip}.
Then for the value function $u_{\epsilon}$ with boundary data $F$, we have
\begin{align}\label{towlatest} \begin{split}
& |u_{\epsilon}(x,t)-u_{\epsilon}(y,s)| 
\\ & \le C(n,\alpha, R,\delta, L) (K+ K^{1/2}) +L(|x-y|+|t-s|^{1/2}+ 2\delta),
\end{split}
\end{align} 
where $K =\min \{ |x-y|,t \}+\epsilon$ and $R, \delta$ are the constants in Lemma \ref{bafn}
for every $(x,t) \in \Omega_{T}$ and $(y,s) \in O_{\epsilon,T}$.
\end{lemma}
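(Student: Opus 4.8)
The plan is to estimate the difference $|u_{\epsilon}(x,t)-u_{\epsilon}(y,s)|$ for an interior point $(x,t) \in \Omega_T$ and a point $(y,s) \in O_{\epsilon,T}$ by comparing $u_{\epsilon}$ at $(x,t)$ with the value $F$ near $y$, using the auxiliary radial game from Lemma \ref{bafn} to control how long the token takes to reach the ball $\overline{B}_\delta(z)$ attached to the boundary at the point of $\partial\Omega$ closest to $y$. Concretely, given $y \in O_{\epsilon,T}$, pick $\bar y \in \partial\Omega$ with $|y-\bar y| < \epsilon$ and let $B_\delta(z) \subset \mathbb{R}^n \setminus \Omega$ be the exterior sphere touching $\partial\Omega$ at $\bar y$. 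First I would bound the game value $u_{\epsilon}(x,t)$ from above and below by letting (respectively) Player II or Player I play the pull-toward-$z$ / pull-away-from-$z$ strategy $S_{\mathrm{II}}^z$, $S_{\mathrm{I}}^z$ of Lemma \ref{bafn}, while the opponent plays arbitrarily; since the spatial projections of the token trajectory are then dominated by the radial auxiliary walk, the game exits through $\Gamma_\epsilon$ (in space) once the spatial coordinate enters $\overline{B}_\delta(z)$.

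The key steps, in order: (i) translate the estimate on $\mathbb{E}[\tau^\ast]$ from Lemma \ref{bafn} into a bound on the expected number of game turns before the token's spatial coordinate is within $\overline{B}_\delta(z)$, hence within distance $\approx\delta$ of $\bar y$; here $\dist(\partial B_\delta(y),x_0)$ in Lemma \ref{bafn} is comparable to $\dist(x,\partial\Omega) \le |x-y|+\epsilon$, and one must also keep in mind the time coordinate decreases by $\epsilon^2/2$ each turn, so the game additionally terminates once $t$ has dropped to $0$ after at most $2t/\epsilon^2$ turns — this is what produces the $\min\{|x-y|,t\}$ in $K$. (ii) At the (space-or-time) exit point $Z_\tau = (x_\tau,t_\tau) \in \Gamma_{\epsilon,T}$, estimate $|F(Z_\tau) - F(y,s)|$ using the Lipschitz hypothesis \eqref{bdlip}: the spatial displacement is at most $\dist(x,\partial\Omega)$-controlled plus a $2\delta$ term (the diameter of the exterior ball contribution) plus $|x-y|$, and the time displacement is controlled by $|t-s|^{1/2}$ plus the $o(1)$/$\epsilon$ error from the last step; (iii) combine via
\begin{align*}
|u_{\epsilon}(x,t) - F(y,s)| \le \mathbb{E}\big[|F(Z_\tau) - F(y,s)|\big] \le C(K+K^{1/2}) + L(|x-y|+|t-s|^{1/2}+2\delta),
\end{align*}
where the $K^{1/2}$ term arises because the expected \emph{spatial} excursion of a random walk over $N$ steps is of order $\epsilon\sqrt{N}$, and $N \approx K/\epsilon^2$ gives an $\epsilon\sqrt{N} \approx K^{1/2}$ contribution when the Lipschitz bound on $F$ is applied to that excursion; (iv) finally observe $u_{\epsilon}(y,s) = F(y,s)$ since $(y,s)\in O_{\epsilon,T}$ where $\delta \equiv 1$, so the left side is exactly $|u_{\epsilon}(x,t)-u_{\epsilon}(y,s)|$.

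I expect the main obstacle to be a careful and honest treatment of step (i)–(iii) together: the auxiliary game in Lemma \ref{bafn} is genuinely time-independent and lives on $B_R(z)$, so one must argue that playing $S_{\mathrm{I}}^z$ or $S_{\mathrm{II}}^z$ in the real time-dependent game still forces the spatial coordinate to behave like the radial walk (the noise steps are isotropic in the hyperplane $\perp \nu$, so their radial drift is the same as in the auxiliary problem, and the containment in $B_R(z)$ follows because $\Omega \subset B_R(z)$). The second delicate point is bookkeeping the two competing termination mechanisms — spatial exit near $\bar y$ versus temporal exit at $t=0$ — and checking that taking the better of the two yields $K = \min\{|x-y|,t\}+\epsilon$ rather than a sum; this is where one uses that if $t$ is small the game simply ends in $\le 2t/\epsilon^2$ steps regardless of geometry, while if $|x-y|$ is small the token is already near $\partial\Omega$ and Lemma \ref{bafn} gives few steps. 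The extraction of the $K^{1/2}$ term from a square-function/Doob estimate on the martingale part of the spatial coordinate, stopped at the min of the two stopping times, is the quantitative heart of the argument; everything else is triangle inequalities and the Lipschitz bound \eqref{bdlip}.
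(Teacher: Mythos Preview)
Your plan is essentially the paper's proof: fix the exterior ball $B_\delta(z)$, let one player pull toward $z$, control $\mathbb{E}[\tau]$ by the radial auxiliary game of Lemma \ref{bafn} (and by the trivial time bound $2t/\epsilon^2$), and then feed this into the Lipschitz hypothesis on $F$. Two small corrections are worth making. First, the optimality of the radial direction against an \emph{arbitrary} opponent is not just ``isotropy of the noise'': the paper carries out an explicit angle computation showing that
\[
\sup_{\nu\in S^{n-1}}\Big[\alpha|x+\epsilon\nu-z|+\beta\kint_{B_\epsilon^{\nu}(x)}|\tilde x-z|\,d\mathcal{L}^{n-1}\Big]
\]
is attained at $\nu_x=(x-z)/|x-z|$, which is exactly what lets you compare the true game's $\tau$ with the auxiliary $\tau^{\ast}$ and also build the supermartingale $|x_k-z|-Ck\epsilon^2$. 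Second, your attribution of the $K^{1/2}$ term is off: the \emph{spatial} excursion is controlled \emph{linearly}, $\mathbb{E}[|x_\tau-z|]\le |x_0-z|+C\epsilon^2\mathbb{E}[\tau]$, and this produces the $K$ term; the $K^{1/2}$ comes from the \emph{time} displacement $|t_\tau-t|^{1/2}=\epsilon\sqrt{\tau/2}$ together with Jensen, $\mathbb{E}[\sqrt{\tau}]\le(\mathbb{E}[\tau])^{1/2}$. With these two points straightened out, your outline matches the paper's argument.
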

\begin{proof}
We first consider the case $t=s$.
Set $N = \ceil{ 2t/\epsilon^{2}} $. 
Since $\Omega$ satisfies the exterior sphere condition,
we can find a ball $B_{\delta}(z) \subset \mathbb{R}^{n} \backslash \Omega $
such that $y \in \partial B_{\delta}(z)$.
Assume that Player I takes a strategy $S_{I}^{z}$ of pulling towards $z$.

We estimate the expected value for the distance $|x_{\tau}-x_{0}|$ under the game setting.
Let $\theta$ be the angle between $\nu$ and $x-z$. 
And we assume that $x=0$ and $z=(0,\cdots,0, r \sin\theta, -r\cos\theta)$
by using a proper transformation. 
Then the following term
$$ \alpha |x+\epsilon \nu-z|  + \beta \kint_{B_{\epsilon}^{\nu_{x}}(x ) 
 }|\tilde{x}-z| d\mathcal{L}^{n-1}(\tilde{x})$$
can be written as
\begin{align*} &A(\theta)\\ & = \alpha \sqrt{(r \sin \theta)^{2}+(r \cos \theta+\epsilon)^{2} }
+ \beta \kint_{T_{\epsilon}} \sqrt{(y-r\sin \theta)^{2}+(r\cos \theta)^{2}} d\mathcal{L}^{n-1}(y)
\\ & = \alpha \sqrt{r^{2}+2r\epsilon \cos \theta + \epsilon^{2}}
+ \beta \kint_{T_{\epsilon}} \sqrt{r^{2}-2ry_{n-1} \sin \theta + |y|^{2}} d\mathcal{L}^{n-1}(y)
\\ & =: \alpha A_{1}(\theta) + \beta A_{2} (\theta) ,
\end{align*}
where $r = |x-z|$ and $T_{\epsilon} = \{ x = (x_{1}, \dots, x_{n}) \in B_{\epsilon}(0) : x_{n}=0 \}$.
Observe that $A_{1}$ is decreasing in the interval $(0, \pi)$. (Thus, $A_{1}$ has the maximum at $\theta=0$ in $[0, \pi]$)
On the other hand, we have
$$ A_{2}^{'}(\theta) = -  \kint_{T_{\epsilon}} \frac{ry_{n-1}\cos \theta}{\sqrt{r^{2}-2ry_{n-1} \sin \theta + |y|^{2}} } d\mathcal{L}^{n-1}(y)$$
and this function is a symmetric function about $\theta= \pi /2 $. 
We also check that $A_{2}^{'} <0$ in $(0, \pi/2)$.
Thus, we verify that 
$A_{2}$ has a maximum at $\theta=0, \pi$ in $[0, \pi]$ and $\theta(0)=\theta(\pi)$. 
This leads to the following estimate
\begin{align} \begin{split}\label{nuest}
\sup_{\nu \in S^{n-1}}\bigg[\alpha |x&+\epsilon \nu -z| + \beta \kint_{B_{\epsilon}^{\nu}(x ) 
 }|\tilde{x}-z| d\mathcal{L}^{n-1}(\tilde{x}) \bigg]
\\ & = \alpha( |x-z|+\epsilon ) + \beta \kint_{B_{\epsilon}^{\nu_{x}}(x ) 
 }|\tilde{x}-z| d\mathcal{L}^{n-1}(\tilde{x}),
 \end{split}
\end{align}
where $\nu_{x} = (x-z)/|x-z|$.

Therefore, we have
\begin{align*}
&\mathbb{E}_{S_{\mathrm{I}}^{z}, S_{\mathrm{II}}}^{(x_{0},t)} [|x_{k} -z| |(x_{0},t_{0}), \dots, (x_{k-1},t_{k-1})] \\
& \le \frac{1-\delta(x_{k-1},t_{k-1})}{2} \bigg[ \alpha(|x_{k-1}-z|-\epsilon) + \beta \kint_{B_{\epsilon}^{\nu_{x_{k-1}}}(x_{k-1} ) } |\tilde{x}-z| d\mathcal{L}^{n-1}(\tilde{x}) \bigg] \\
& \quad + \frac{1-\delta(x_{k-1},t_{k-1})}{2} \bigg[ \alpha(|x_{k-1}-z|+\epsilon) + \beta \kint_{B_{\epsilon}^{\nu_{x_{k-1}}}(x_{k-1} ) }\hspace{-0.6em} |\tilde{x}-z| d\mathcal{L}^{n-1}(\tilde{x}) \bigg] 
\\ & \quad + \delta(x_{k-1},t_{k-1})|x_{k-1}-z|
\\ & =  |x_{k-1}-z| \\ & \qquad + \beta(1-\delta(x_{k-1},t_{k-1})) \bigg( \kint_{B_{\epsilon}^{\nu_{x_{k-1}}}(x_{k-1} ) } \hspace{-0.6em}|\tilde{x}-z| d\mathcal{L}^{n-1}(\tilde{x}) - |x_{k-1}-z| \bigg).
\end{align*}
We also observe that $$0<\beta(1-\delta(x_{k-1},t_{k-1}))<1,$$
$$ |x_{k-1}-z| \le |\tilde{x}-z| \le  \sqrt{(x_{k-1}-z)^{2} + \epsilon^{2}} \qquad \textrm{for}\ x \in B_{\epsilon}^{\nu_{x_{k-1}}},$$
and 
$$ 0< \sqrt{a^{2}+\epsilon^{2}} - a < \frac{\epsilon^{2}}{2a} \qquad \textrm{for}\ a>0. $$

Therefore,
\begin{align*}
\mathbb{E}_{S_{\mathrm{I}}^{z}, S_{\mathrm{II}}}^{(x_{0},t)} [|x_{k} -z| |(x_{0},t_{0}), \dots, (x_{k-1},t_{k-1})] \le |x_{k-1}-z|+ C\epsilon^{2}
\end{align*}
for some $C=C(n,\delta)>0$.
This yields that 
$$ M_{k}= |x_{k}-z| -Ck \epsilon^{2} $$
is a supermartingale.

Applying the optional stopping theorem and Jensen's inequality to $M_{k}$, we derive that
\begin{align}  \label{oriste} \begin{split}
\mathbb{E}_{S_{\mathrm{I}}^{z}, S_{\mathrm{II}}}^{(x_{0},t)}& [|x_{\tau} -z|+|t_{\tau}-t|^{1/2}  ]  \\
& = \mathbb{E}_{S_{\mathrm{I}}^{z}, S_{\mathrm{II}}}^{(x_{0},t)} \bigg[|x_{\tau} -z|+ \epsilon \sqrt{\frac{\tau}{2}} \bigg]
\\ & \le |x_{0} -z| + C \epsilon^{2} \mathbb{E}_{S_{\mathrm{I}}^{z}, S_{\mathrm{II}}}^{(x_{0},t)}[\tau] +C\epsilon \big(\mathbb{E}_{S_{\mathrm{I}}^{z}, S_{\mathrm{II}}}^{(x_{0},t)}[\tau] \big)^{1/2}.
\end{split}
\end{align} 
Next we need to obtain estimates for $\mathbb{E}_{S_{\mathrm{I}}^{z}, S_{\mathrm{II}}}^{(x_{0},t)}[\tau] $. To do this, we use the result in Lemma \ref{bafn}.
We can check that the exit time $\tau$ of the original game is bounded by $\tau^{\ast}$ 
because the expected value of $|x_{k}-z|$ for given $|x_{k-1}-z|$ is maximized when Player II chooses the strategy $S_{\mathrm{II}}^{z}$ from \eqref{nuest}.
Thus, we have
\begin{align*}
\mathbb{E}_{S_{\mathrm{I}}^{z}, S_{\mathrm{II}}}^{(x_{0},t)}[\tau]& \le \min \{
 \mathbb{E}_{S_{\mathrm{I}}^{z},S_{\mathrm{II}}^{z}}^{x_{0}}[\tau^{\ast}] , N \}
 \\ & \le
  \min \bigg\{\frac{C(n,\alpha, R/\delta)  (\dist(\partial B_{\delta}(z),x_{0})+\epsilon)}{\epsilon^{2}} , N \bigg\}
\end{align*}
for any strategy $S_{\mathrm{II}}$ for Player II. 
We also see that
$$ \dist(x_{0}, \partial B_{\delta}(z)) \le |x_{0}-y|. $$
This and \eqref{oriste} imply
\begin{align*}
&\mathbb{E}_{S_{\mathrm{I}}^{z}, S_{\mathrm{II}}}^{(x_{0},t)} [|x_{\tau} -z|+|t_{\tau}-t|^{1/2}  ] \\ & \le
|x_{0}-y| +C \min \{   |x_{0}-y|+\epsilon,  \epsilon^{2}N \}+ C \min \{   |x_{0}-y|+\epsilon,  \epsilon^{2}N \}^{1/2},
\end{align*}
where $C$ is a constant depending on $n,\alpha, R$ and $\delta$.
Therefore, we get
\begin{align*}
|\mathbb{E}_{S_{\mathrm{I}}^{z}, S_{\mathrm{II}}}^{(x_{0},t)} &[F(x_{\tau},t_{\tau})] - F(z,t)| \\ & 
\le L( |x_{0}-y| +C(n,\alpha, R,\delta) \min \{   |x_{0}-y|+\epsilon,  \epsilon^{2}N \} 
\\ & \qquad \quad +  C(n,\alpha, R/\delta)\min \{   |x_{0}-y|+\epsilon,  \epsilon^{2}N \}^{1/2} )
\end{align*}
and this yields
\begin{align*}
u_{\epsilon}(x_{0},t)&=
\sup_{S_{\mathrm{I}}}\inf_{S_{\mathrm{II}}}\mathbb{E}_{S_{\mathrm{I}} , S_{\mathrm{II}}}^{(x_{0},t)} [F(x_{\tau},t_{\tau})] \\ &
\ge \inf_{S_{\mathrm{II}}}\mathbb{E}_{S_{\mathrm{I}}^{z}, S_{\mathrm{II}}}^{(x_{0},t)} [F(x_{\tau},t_{\tau})] \\ &
\ge F(z,t)- L\{ C(n,\alpha, R,\delta) (K+ K^{1/2})+|x_{0}-y|\}
\\ & \ge F(y,t)-C(n,\alpha, R,\delta,L) (K+ K^{1/2}) -  L(|x_{0}-y|+2 \delta)
\end{align*}
for $K = \min \{ |x_{0}-y|+\epsilon,  \epsilon^{2}N \}$.
Note that we can also derive the upper bound for $u_{\epsilon}(x_{0},t)$
by taking the strategy where Player II pulls toward to $z$.

Meanwhile, in the case of $t\neq s$, we have
\begin{align*}
 &|u_{\epsilon}(x,t)-u_{\epsilon}(y,s)| \\ & \le 
|u_{\epsilon}(x,t)-u_{\epsilon}(y,t)|+|u_{\epsilon}(y,t)-u_{\epsilon}(y,s)| \\ & 
\le C(n,\alpha,R/\delta,L)(K+ K^{1/2})+ L (|x-y|+2\delta )+ L|t-s|^{1/2} ,
\end{align*}
where $K = \min \{ |x_{0}-y|+\epsilon,  \epsilon^{2}N \}$ and $N= \ceil {2t/\epsilon^{2}} $.
This gives our desired estimate.
\end{proof}

We can also derive the following result on the initial boundary.
\begin{lemma} \label{inbest}
Assume that $\Omega$ satisfies the exterior sphere condition and $F$ satisfies \eqref{bdlip}.
Then for the value function $u_{\epsilon}$ with boundary data $F$, we have
\begin{align} \label{inbinq}
|u_{\epsilon}(x,t)-u_{\epsilon}(y,s)|  \le C (|x-y|+t ^{1/2}+ \epsilon) 
\end{align} 
for every $(x,t) \in \Omega_{T}$ and $(y,s) \in \Omega \times (-\epsilon^{2}/2,0]$.
The constant $C$ depends only on $ n,L$.
\end{lemma}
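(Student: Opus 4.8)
\textbf{Proof proposal for Lemma \ref{inbest}.}
The plan is to bound the value function near the initial boundary $\{t=0\}$ by estimating how much the token can drift in space before the game is forced to terminate. Since the time coordinate decreases by $\epsilon^2/2$ at every step, starting from $(x,t)\in\Omega_T$ with $t$ small the game must stop after at most $N=\ceil{2t/\epsilon^2}$ moves unless it has already exited through the lateral boundary. In either scenario the number of turns $\tau$ is bounded by $N$, and so the spatial displacement of the token is a sum of at most $N$ increments each of size at most $\epsilon$. The crude deterministic bound $|x_\tau-x_0|\le N\epsilon$ is too weak (it gives $O(t/\epsilon)$), so instead I would track the martingale structure: writing $M_k=|x_k-x_0|^2-C k\epsilon^2$ for a suitable $C=C(n)$, one checks using the DPP \eqref{dppvar} and the fact that each step's conditional displacement has mean controlled by $\epsilon$ and second moment controlled by $\epsilon^2$ that $M_k$ is a supermartingale regardless of the players' strategies (the key computation is that the conditional expectation of $|x_k-x_0|^2$ given the history increases by at most $C\epsilon^2$ per step, which follows from expanding the square and noting the random-walk part contributes $\le\epsilon^2$ while the $\alpha$-part contributes a lower-order cross term). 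Then the optional stopping theorem gives $\mathbb{E}[|x_\tau-x_0|^2]\le C\epsilon^2\,\mathbb{E}[\tau]\le C\epsilon^2 N\le C(t+\epsilon^2)$, hence by Jensen $\mathbb{E}[|x_\tau-x_0|]\le C(t^{1/2}+\epsilon)$.

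Next I would combine this displacement bound with the Lipschitz hypothesis \eqref{bdlip} on $F$. Since the game started at $(x_0,t_0)=(x,t)$ terminates at a point $Z_\tau=(x_\tau,t_\tau)$ with $t_\tau\le t$ and $t_\tau\ge t-\tau\epsilon^2/2\ge -\epsilon^2/2$, we have $|t_\tau - 0|\le t$ and $|x_\tau - x|\le |x_\tau-x_0|$; if the game exits through the lateral boundary first the argument of Lemma \ref{latbest} would be needed, but for the initial-boundary estimate one can arrange (as in Lemma \ref{latbest}) that neither player needs a special strategy — any pair of strategies yields $\mathbb{E}[\tau]\le N$. Therefore
\begin{align*}
|u_\epsilon(x,t)-F(x,0)| &= \Big|\sup_{S_{\mathrm I}}\inf_{S_{\mathrm{II}}}\mathbb{E}_{S_{\mathrm I},S_{\mathrm{II}}}^{(x,t)}[F(Z_\tau)]-F(x,0)\Big| \\
&\le \sup_{S_{\mathrm I},S_{\mathrm{II}}}\mathbb{E}_{S_{\mathrm I},S_{\mathrm{II}}}^{(x,t)}\big[L(|x_\tau-x|+|t_\tau|^{1/2})\big] \\
&\le L\big(C(t^{1/2}+\epsilon)+t^{1/2}\big) \le C(n,L)(t^{1/2}+\epsilon).
\end{align*}
Writing $F(x,0)=\varphi(x,0)$ (the initial datum, which by \eqref{stabof}-type construction agrees with $F$ on the initial strip up to an $O(\epsilon)$ Lipschitz error), and using the same bound at $(y,s)$ with $s$ small, together with the Lipschitz continuity of the initial datum $\varphi$ in $x$, gives
$$|u_\epsilon(x,t)-u_\epsilon(y,s)|\le |u_\epsilon(x,t)-\varphi(x,0)|+|\varphi(x,0)-\varphi(y,0)|+|\varphi(y,0)-u_\epsilon(y,s)|\le C(|x-y|+t^{1/2}+\epsilon),$$
where $s\le 0$ so no $s^{1/2}$ term appears; this is exactly \eqref{inbinq}.

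The main obstacle I anticipate is establishing the supermartingale property of $M_k=|x_k-x_0|^2-Ck\epsilon^2$ uniformly over strategies with a clean constant $C$ depending only on $n$: one must carefully handle the non-symmetric $\alpha$-part of the step (a deterministic move of size $\epsilon$ chosen by whichever player wins the coin toss), showing that the worst-case contribution of the cross term $2\langle x_{k-1}-x_0, \text{step}\rangle$ is absorbed because the two players' choices enter with a $\half(\cdot)+\half(\cdot)$ average and the magnitude of the step is exactly $\epsilon$; the random-walk part is symmetric and contributes a clean $\beta\epsilon^2\cdot\frac{n-1}{n}$-type term. A secondary technical point is the interaction with the lateral boundary: one should note that terminating early (through $\delta$ on $\Gamma_{\epsilon,T}$) only decreases $\tau$, so the bound $\mathbb{E}[\tau]\le N$ holds a fortiori, and the displacement estimate is unaffected. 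Apart from these, the argument is a routine optional-stopping computation parallel to the proof of Lemma \ref{latbest}.
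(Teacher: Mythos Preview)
Your overall outline matches the paper's proof closely, but there is a genuine gap in the supermartingale step. You claim that $M_k=|x_k-x_0|^2-Ck\epsilon^2$ is a supermartingale \emph{regardless of the players' strategies}, and you justify this by saying that ``the two players' choices enter with a $\half(\cdot)+\half(\cdot)$ average''. This is false. Expanding the square, the cross term contributes
\[
2\,\mathbb{E}\big[\langle x_{k-1}-x_0,\ \text{step}\rangle \,\big|\,\text{history}\big]
= (1-\delta(x_{k-1},t_{k-1}))\,\alpha\epsilon\,\big\langle x_{k-1}-x_0,\ \nu_k^{\mathrm I}+\nu_k^{\mathrm{II}}\big\rangle,
\]
since the $\beta$-random-walk part has conditional mean zero. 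If both players happen to choose the same unit vector $\nu$ (nothing in the rules forbids this), this term is of order $\epsilon\,|x_{k-1}-x_0|$, not $\epsilon^2$, and $M_k$ is \emph{not} a supermartingale. Consequently the bound $\sup_{S_{\mathrm I},S_{\mathrm{II}}}\mathbb{E}[|x_\tau-x_0|]\le C(t^{1/2}+\epsilon)$ that you use in the displayed chain of inequalities simply does not hold.

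The repair is exactly what the paper does: you must \emph{fix} one player's strategy to pull toward the target point. For the lower bound on $u_\epsilon$, set $S_{\mathrm I}=S_{\mathrm I}^{y}$ (pull toward $y$; pulling toward $x_0$ would also work with your triangle-inequality ending). Then for \emph{arbitrary} $S_{\mathrm{II}}$ the worst case is $\nu^{\mathrm{II}}$ pointing away from $y$, the $\alpha$-parts combine to $\frac{\alpha}{2}\{(|x_{k-1}-y|+\epsilon)^2+(|x_{k-1}-y|-\epsilon)^2\}=\alpha(|x_{k-1}-y|^2+\epsilon^2)$ (with the obvious modification when $|x_{k-1}-y|<\epsilon$), and $M_k=|x_k-y|^2-Ck\epsilon^2$ \emph{is} a supermartingale. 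Optional stopping then gives $\mathbb{E}_{S_{\mathrm I}^y,S_{\mathrm{II}}}[|x_\tau-y|]\le |x_0-y|+C(t^{1/2}+\epsilon)$, whence $u_\epsilon(x_0,t)\ge \inf_{S_{\mathrm{II}}}\mathbb{E}_{S_{\mathrm I}^y,S_{\mathrm{II}}}[F(Z_\tau)]\ge F(y,t)-C(|x_0-y|+t^{1/2}+\epsilon)$. The upper bound is symmetric with $S_{\mathrm{II}}$ fixed. Apart from this correction (and the cosmetic choice of centering at $x_0$ versus $y$), your argument coincides with the paper's.
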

\begin{proof}
Set $(x,t)=(x_{0},t_{0})$ and $N =\ceil{ 2t/\epsilon^{2}} $.
As in the above lemma, we also estimate the expected value of the distance between $y$ and the exit point $x_{\tau}$. 
Consider the case that Player I chooses a strategy of pulling to $y$. 
When $|x_{k-1}-y| \ge \epsilon$, we have
\begin{align*}
&\mathbb{E}_{S_{\mathrm{I}}^{y},S_{\mathrm{II}}}^{(x_{0},t_{0})}[|x_{k}-y|^{2}| (x_{0},t_{0}), \dots, (x_{k-1},t_{k-1}) ] \\
& \le (1-\delta (x_{k-1},t_{k-1}))\times \\ & \quad
 \bigg[ \frac{\alpha}{2} \{ (|x_{k-1}-y|+\epsilon)^{2} +(|x_{k-1}-y|-\epsilon)^{2}   \} \\ &
 \qquad \qquad 
+ \beta \kint_{B_{\epsilon}^{\nu_{x_{k-1}}}(x_{k-1} ) }  |\tilde{x}-y|^{2} d\mathcal{L}^{n-1}(\tilde{x}) \bigg]
+ \delta (x_{k-1},t_{k-1})|x_{k-1}-y|^{2}
\\ & \le \alpha (|x_{k-1}-y|^{2}+\epsilon^{2}) + \beta (|x_{k-1}-y|^{2}+C\epsilon^{2})
\\ & \le |x_{k-1}-y|^{2}+C\epsilon^{2}
\end{align*}
for some constant $C>0$ which is independent of $\epsilon$. 
We recall the notation $\nu_{x_{k-1}} = (x_{k-1}-z)/|x_{k-1}-z| $ here.
Otherwise, we also see that
\begin{align*}
&\mathbb{E}_{S_{\mathrm{I}}^{y},S_{\mathrm{II}}}^{(x_{0},t_{0})}[|x_{k}-y|^{2}| (x_{0},t_{0}), \dots, (x_{k-1},t_{k-1}) ] \\
& \le (1-\delta (x_{k-1},t_{k-1}))
 \bigg[ \frac{\alpha}{2} (|x_{k-1}-y|+\epsilon)^{2} 
+ \beta \kint_{B_{\epsilon}^{\nu_{x_{k-1}}}(x_{k-1} ) }\hspace{-0.7em}  |\tilde{x}-y|^{2} d\mathcal{L}^{n-1}(\tilde{x}) \bigg]
\\ & \quad + \delta (x_{k-1},t_{k-1})|x_{k-1}-y|^{2},
\end{align*}
and then we get the same estimate as above since  
$$(|x_{k-1}-y|+\epsilon)^{2}  \le 2(|x_{k-1}-y|^{2} +\epsilon^{2}). $$
Therefore, we see that $$M_{k} = |x_{k}-y|^{2}-Ck\epsilon^{2}$$
is a supermartingale.

Now we obtain 
$$\mathbb{E}_{S_{\mathrm{I}}^{y},S_{\mathrm{II}}}^{(x_{0},t)}[|x_{\tau}-y|^{2}] \le 
|x_{0}-y|^{2} + C \epsilon^{2} \mathbb{E}_{S_{\mathrm{I}}^{y},S_{\mathrm{II}}}^{(x_{0},t)}[\tau] $$
by using the optional stopping theorem.
Since $\tau < \ceil{ 2t/\epsilon^{2}}$, the right-hand side term is estimated by
$ |x_{0}-y|^{2} +C (t+ \epsilon^{2}) $.
Applying Jensen's inequality, we get
\begin{align*}
\mathbb{E}_{S_{\mathrm{I}}^{y},S_{\mathrm{II}}}^{(x_{0},t)}[|x_{\tau}&-y| ] \le 
\big( \mathbb{E}_{S_{\mathrm{I}}^{y},S_{\mathrm{II}}}^{(x_{0},t)}[|x_{\tau}-y|^{2}] \big)^{\frac{1}{2}} 
\\ & \le \big( |x_{0}-y|^{2} + C (t + \epsilon^{2}) \big)^{\frac{1}{2}} 
\\ & \le |x_{0}-y| + C(t^{1/2}+ \epsilon ).
\end{align*}
From the above estimate, we deduce that
\begin{align*}
u_{\epsilon}(x_{0},t)&=
\sup_{S_{\mathrm{I}}}\inf_{S_{\mathrm{II}}}\mathbb{E}_{S_{\mathrm{I}} , S_{\mathrm{II}}}^{(x_{0},t)} [F(x_{\tau},t_{\tau})] \\ &
\ge F(y, t) - L \ \mathbb{E}_{S_{\mathrm{I}}^{y},S_{\mathrm{II}}}^{(x_{0},t)}[|x_{\tau}-y|+|t-t_{\tau}|^{1/2}]
\\ & \ge F(y,t) - C(|x_{0}-y|+t^{1/2}+\epsilon).
\end{align*}

The upper bound can be derived in a similar way, and then we get the estimate \eqref{inbinq}.
\end{proof}

\color{black}
  
\section{Application to PDEs}
The objective of this section is to study behavior of $u_{\epsilon}$ when $\epsilon$ tends to zero. 
This issue has been studied in several preceding papers (see \cite{MR2875296,MR3161602,MR3494400,MR3623556}).
Those results show that there is a close relation between value functions of tug-of-war games and certain types of PDEs.
Now we will establish that there is a convergence theorem showing that $u_{\epsilon}$ converge to the unique viscosity solution of the following Dirichlet problem for the normalized parabolic $p$-Laplace equation
\begin{align} \label{paraplap}
\left\{ \begin{array}{ll}
(n+p)u_{t}= \Delta_{p}^{N} u & \textrm{in $\Omega_{T} $,}\\
 u=F & \textrm{on $\partial_{p}\Omega_{T}$}\\
\end{array} \right. 
\end{align}
as $ \epsilon \to 0$.
Here, $p$ satisfies $\alpha = (p-1)/(p+n) $ and $ \beta = (n+1)/(p+n)$.

Now we introduce the notion of viscosity solutions for \eqref{paraplap}.
Note that we need to consider the case when the gradient vanishes. 
Here we use semicontinuous extensions of operators in order to define viscosity solutions.
For these extensions, we refer the reader to \cite{MR1770903,MR2238463} for more details.
\begin{definition}[Viscosity solution] \label{vissol} 
A function $u \in C(\Omega_{T}) $ is a viscosity solution to \eqref{paraplap} if
the following conditions hold:
\begin{itemize}
\item[(a)] for all $ \varphi \in C^{2}(\Omega_{T} ) $ touching $u$ from above at $(x_{0},t_{0}) \in \Omega_{T} $, 
\begin{align*} 
\left\{ \begin{array}{ll}
 \Delta_{p}^{N}\varphi(x_{0},t_{0})  \ge (n+p) \varphi_{t}(x_{0},t_{0}) \qquad \qquad \textrm{if $ D\varphi (x_{0},t_{0}) \neq 0 $,}\\
\lambda_{\max}((p-2)D^{2}\varphi(x_{0},t_{0}) ) & \\ \qquad \qquad \qquad + \Delta\varphi(x_{0},t_{0}) \ge (n+p) \varphi_{t}(x_{0},t_{0})   \qquad \textrm{if $D\varphi (x_{0},t_{0}) = 0   $.}\\
\end{array} \right. 
\end{align*}
\item[(b)] for all $ \varphi \in C^{2}(\Omega_{T} ) $ touching $u$ from below at $(x_{0},t_{0}) \in \Omega_{T}$,
\begin{align*} 
\left\{ \begin{array}{ll}
 \Delta_{p}^{N}\varphi(x_{0},t_{0})  \le (n+p) \varphi_{t}(x_{0},t_{0}) \qquad \qquad \textrm{if $ D\varphi (x_{0},t_{0}) \neq 0 $,}\\
\lambda_{\min}((p-2)D^{2}\varphi(x_{0},t_{0}) ) & \\ \qquad \qquad \qquad + \Delta\varphi(x_{0},t_{0}) \le (n+p) \varphi_{t}(x_{0},t_{0})   \qquad \textrm{if $D\varphi (x_{0},t_{0}) = 0   $.}\\
\end{array} \right. 
\end{align*}
\end{itemize}
Here, the notation $\lambda_{\max}(X)$ and $\lambda_{\min}(X)$ mean the largest and the smallest eigenvalues of a symmetric matrix $X$. 
\end{definition}

The following Arzel\`a-Ascoli criterion will be used to obtain the main result in this section.
It is essentially the same proposition as \cite[Lemma 5.1]{MR3494400}.
We can find the proof of this criterion for elliptic version in \cite[Lemma 4.2]{MR3011990}.
\begin{lemma} \label{aras}
Let $ \{ u_{\epsilon} : \overline{\Omega}_{T} \to \mathbb{R}, \epsilon >0 \} $ be a set of functions such that
\begin{itemize}
\item[(a)] there exists a constant $C >0$ so that $|u_{\epsilon}(x,t)| < C $ for every $ \epsilon > 0$ and every $(x,t) \in \overline{\Omega}_{T} $. 
\\ \item[(b)] given $ \eta > 0$, there are constants $r_{0}$ and $\epsilon_{0}$ so that for every $\epsilon >0$ and $(x,t),(y,s) \in \overline{\Omega}_{T}$ with $d ((x,t),(y,s)) < r_{0} $, it holds
$$ |u_{\epsilon}(x,t)-u_{\epsilon}(y,s)| < \eta .$$
\end{itemize}
Then, there exists a uniformly continuous function $u: \overline{\Omega}_{T} \to \mathbb{R} $ and a subsequence $\{ u_{\epsilon_{i}} \} $ such that $ u_{\epsilon_{i}}  $ uniformly converge to $u$  in $\overline{\Omega}_{T}$, as $i \to \infty$.
\end{lemma}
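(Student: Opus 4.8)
The plan is to run the classical Arzel\`a--Ascoli diagonal extraction, adapted to the weak (asymptotic) equicontinuity hypothesis (b). Since $\overline{\Omega}_{T}$ is compact for the parabolic distance $d$, it is a separable metric space; fix a countable dense set $D=\{z_{1},z_{2},\dots\}\subset\overline{\Omega}_{T}$. First I would exhaust $D$: by hypothesis (a) the real sequence $\{u_{\epsilon}(z_{1})\}_{\epsilon}$ is bounded, so along a subsequence it converges; passing to a further subsequence it also converges at $z_{2}$, and so on. The usual diagonal procedure then produces a single sequence $\epsilon_{i}\to 0$ such that $\lim_{i\to\infty}u_{\epsilon_{i}}(z_{k})$ exists for every $k$.

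Next I would show that $\{u_{\epsilon_{i}}\}$ is uniformly Cauchy on $\overline{\Omega}_{T}$. Fix $\eta>0$ and let $r_{0}$ (and $\epsilon_{0}$) be the constants provided by (b) for this $\eta$. Using compactness of $\overline{\Omega}_{T}$ and density of $D$, choose finitely many points $z_{k_{1}},\dots,z_{k_{m}}\in D$ whose $d$-balls $B_{d}(z_{k_{j}},r_{0})$ cover $\overline{\Omega}_{T}$ (cover $\overline{\Omega}_{T}$ by finitely many balls of radius $r_{0}/2$ and replace each centre by a nearby point of $D$). Since each of the finitely many sequences $\{u_{\epsilon_{i}}(z_{k_{j}})\}_{i}$ converges, it is Cauchy, so there is $I$ with $|u_{\epsilon_{i}}(z_{k_{j}})-u_{\epsilon_{\ell}}(z_{k_{j}})|<\eta$ for all $i,\ell\ge I$ and all $j=1,\dots,m$ (enlarging $I$, also $\epsilon_{i}<\epsilon_{0}$ for $i\ge I$). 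Given any $(x,t)\in\overline{\Omega}_{T}$, pick $j$ with $d((x,t),z_{k_{j}})<r_{0}$; then for $i,\ell\ge I$, applying (b) to $u_{\epsilon_{i}}$ and to $u_{\epsilon_{\ell}}$ and using the triangle inequality,
\begin{align*}
|u_{\epsilon_{i}}(x,t)-u_{\epsilon_{\ell}}(x,t)| & \le |u_{\epsilon_{i}}(x,t)-u_{\epsilon_{i}}(z_{k_{j}})| \\
& \quad + |u_{\epsilon_{i}}(z_{k_{j}})-u_{\epsilon_{\ell}}(z_{k_{j}})| + |u_{\epsilon_{\ell}}(z_{k_{j}})-u_{\epsilon_{\ell}}(x,t)| < 3\eta .
\end{align*}
Since $\eta>0$ and $(x,t)$ are arbitrary, $\{u_{\epsilon_{i}}\}$ is uniformly Cauchy, hence converges uniformly on $\overline{\Omega}_{T}$ to some $u\colon\overline{\Omega}_{T}\to\mathbb{R}$.

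It then remains to check that $u$ is uniformly continuous. Given $\eta>0$, take again $r_{0}$ from (b); for any $(x,t),(y,s)\in\overline{\Omega}_{T}$ with $d((x,t),(y,s))<r_{0}$ one has $|u_{\epsilon_{i}}(x,t)-u_{\epsilon_{i}}(y,s)|<\eta$ for all $i$ (with $\epsilon_{i}<\epsilon_{0}$), and letting $i\to\infty$ gives $|u(x,t)-u(y,s)|\le\eta$. This is the desired uniform continuity.

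I expect no serious obstacle here: this is the standard diagonal argument, and the only points requiring a little care are that the weak equicontinuity (b) is invoked only for members of the extracted subsequence (so it must hold uniformly in $\epsilon$, which it does), and that the finitely many covering balls must be centred at points of the dense set $D$ where pointwise convergence has already been secured. If one prefers, the conclusion can also be obtained by citing \cite[Lemma 4.2]{MR3011990}, whose proof is line-by-line the same after replacing the spatial modulus of continuity there with the parabolic one.
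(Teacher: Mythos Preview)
Your argument is correct and is precisely the standard diagonal Arzel\`a--Ascoli extraction that the cited references \cite[Lemma~5.1]{MR3494400} and \cite[Lemma~4.2]{MR3011990} carry out; the paper itself does not give an independent proof but simply points to those lemmas, so your write-up in fact supplies what the paper omits.
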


Now we can describe the relation between functions satisfying \eqref{dppvar} and solutions to the normalized parabolic $p$-Laplace equation.
\begin{theorem}
Assume that $\Omega$ satisfies the exterior sphere condition and  $ F \in C(\Gamma_{\epsilon,T}) $
satisfies \eqref{bdlip}.
Let $u_{\epsilon}$ denote the solution to \eqref{dppvar} with boundary data $ F  $ for each $\epsilon>0$.
Then, there exist a function $ u: \overline{\Omega}_{\epsilon,T} \to \mathbb{R}$ and a subsequence 
$ \{ \epsilon_{i} \} $ such that 
$$ u_{\epsilon_{i}} \to u \qquad \textrm{uniformly in} \quad \overline{\Omega}_{T}$$
and the function $u$ is a unique viscosity solution to \eqref{paraplap}.
\end{theorem}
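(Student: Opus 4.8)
The plan is to establish the theorem in three stages: first extract a uniformly convergent subsequence via the Arzel\`a–Ascoli criterion (Lemma \ref{aras}), then show that any such limit is a viscosity solution of \eqref{paraplap}, and finally invoke uniqueness for \eqref{paraplap} to conclude that the whole family converges.

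\textbf{Step 1: Compactness.} I would first verify the hypotheses of Lemma \ref{aras} for the family $\{u_\epsilon\}$. Uniform boundedness follows from the DPP \eqref{dppvar}: iterating $Tu_\epsilon = u_\epsilon$ gives $\|u_\epsilon\|_{L^\infty(\overline\Omega_{\epsilon,T})} \le \|F\|_{L^\infty(\Gamma_{\epsilon,T})}$, which is finite and independent of $\epsilon$. For the asymptotic equicontinuity (condition (b)), I would split into cases by the location of the points relative to the boundary strips. If both points lie in the interior (away from $I_{\epsilon,T}$), the interior estimate \cite[Theorem 5.2]{MR4153524} gives $|u_\epsilon(x,t)-u_\epsilon(y,s)| \le C(d((x,t),(y,s)) + \epsilon)$. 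If one or both points lie near the parabolic boundary, I would combine the interior estimate with the boundary estimates from Lemma \ref{latbest} (lateral boundary) and Lemma \ref{inbest} (initial boundary): first move to a nearby boundary point, apply the appropriate boundary Lemma, then use continuity of $F$. The parameter $\delta$ in Lemma \ref{latbest} can be taken small (or one tracks the exterior sphere radius), so the $2\delta$ term does not obstruct equicontinuity once $d((x,t),(y,s))$ and $\epsilon$ are small. This yields a uniformly continuous limit $u$ on $\overline\Omega_T$ along a subsequence $\{\epsilon_i\}$, and passing to the limit in the boundary estimates shows $u = F$ on $\partial_p\Omega_T$.

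\textbf{Step 2: The limit is a viscosity solution.} This is the technical heart and I expect it to be the main obstacle. I would argue by contradiction in the standard way: suppose $\varphi \in C^2(\Omega_T)$ touches $u$ strictly from below at $(x_0,t_0)$ and the subsolution inequality fails. The key tool is an asymptotic expansion of the operator $\midrg_{\nu}\mathscr{A}_\epsilon \varphi(x,t-\epsilon^2/2;\nu)$. For a smooth $\varphi$, a Taylor expansion in space gives
\begin{align*}
\alpha \varphi(x+\epsilon\nu, s) + \beta \kint_{B_\epsilon^\nu}\varphi(x+h,s)\,d\mathcal{L}^{n-1}(h) = \varphi(x,s) + \frac{\alpha\epsilon^2}{2}\langle D^2\varphi\,\nu,\nu\rangle + \frac{\beta\epsilon^2}{2(n+1)}\big(\Delta\varphi - \langle D^2\varphi\,\nu,\nu\rangle\big) + o(\epsilon^2),
\end{align*}
and then taking $\midrg_\nu$ selects, to leading order, the direction $\nu \approx \pm D\varphi/|D\varphi|$ when $D\varphi \ne 0$, producing the term $\frac{\alpha\epsilon^2}{2}\big(\Delta_\infty^N\varphi + \tfrac{\beta}{\alpha(n+1)}(\Delta\varphi-\Delta_\infty^N\varphi)\big)$; with $\alpha=(p-1)/(p+n)$, $\beta=(n+1)/(p+n)$ this rearranges to $\frac{\epsilon^2}{2(n+p)}\Delta_p^N\varphi$. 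Meanwhile $\varphi(x,t-\epsilon^2/2) = \varphi(x,t) - \frac{\epsilon^2}{2}\varphi_t + o(\epsilon^2)$, so the full right-hand side of the DPP (with $\delta = 0$ near an interior point) becomes $\varphi(x,t) + \frac{\epsilon^2}{2(n+p)}\big(\Delta_p^N\varphi - (n+p)\varphi_t\big) + o(\epsilon^2)$. The case $D\varphi(x_0,t_0)=0$ is handled by the semicontinuous extension: one bounds $\midrg_\nu\langle D^2\varphi\,\nu,\nu\rangle$ between $\lambda_{\min}$ and $\lambda_{\max}$ of the relevant matrix, recovering the conditions in Definition \ref{vissol}. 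Choosing $(x_\epsilon,t_\epsilon)$ to be a minimum point of $u_\epsilon - \varphi$ near $(x_0,t_0)$ (which converges to $(x_0,t_0)$ by uniform convergence) and plugging $\varphi(x_\epsilon,\cdot) + (\text{const})$ into the DPP inequality $u_\epsilon \ge $ (its DPP average of $\varphi$) contradicts the failed inequality after dividing by $\epsilon^2$ and letting $\epsilon \to 0$. The supersolution property is symmetric. The delicate points here are: controlling the $o(\epsilon^2)$ uniformly, handling the $\midrg$ (sup/inf over $\nu$) carefully so that the asymptotics survive the contradiction argument as in \cite{MR3011990,MR3494400}, and dealing with the gradient-vanishing case via the semicontinuous operators.

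\textbf{Step 3: Uniqueness and full convergence.} Since \eqref{paraplap} is a uniformly parabolic (degenerate) equation with a geometric normalized $p$-Laplacian, a comparison principle holds for its viscosity solutions with continuous boundary data; I would cite the comparison/uniqueness theory for such equations (e.g. via \cite{MR1770903,MR2238463} and the references therein for normalized parabolic operators), noting that the Lipschitz boundary data \eqref{bdlip} and the exterior sphere condition guarantee the boundary values are attained continuously. Uniqueness of the viscosity solution then implies that every convergent subsequence of $\{u_\epsilon\}$ has the same limit $u$, and combined with the compactness from Step 1 this upgrades to convergence of the whole family along $\epsilon \to 0$ (or at least of the chosen subsequence $\{\epsilon_i\}$ as stated), completing the proof.
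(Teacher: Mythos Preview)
Your proposal is correct and follows essentially the same route as the paper: compactness via Lemma~\ref{aras} using the interior estimate \cite[Theorem 5.2]{MR4153524} together with Lemmas~\ref{latbest} and~\ref{inbest}, then a Taylor expansion of the DPP operator at nearby minimum points $(x_\epsilon,t_\epsilon)$ to verify the viscosity inequalities (with the $D\varphi=0$ case handled by eigenvalue bounds), and uniqueness cited from the literature (the paper cites \cite[Lemma 6.2]{MR3494400} specifically). One small technical slip: your displayed expansion of $\mathscr{A}_\epsilon\varphi(x,s;\nu)$ omits the first-order term $\alpha\epsilon\langle D\varphi,\nu\rangle$; this only disappears after the $\midrg_\nu$ step, which the paper handles by bounding $\midrg_\nu \mathscr{A}_\epsilon \ge \tfrac12\big(\mathscr{A}_\epsilon(-\nu_{\min})+\mathscr{A}_\epsilon(\nu_{\min})\big)$ and using $B_\epsilon^{-\nu}=B_\epsilon^{\nu}$ to symmetrize---make that cancellation explicit when you write the details.
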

\begin{remark} 
The uniqueness of solutions to \eqref{paraplap} can be found in \cite[Lemma 6.2]{MR3494400}.
\end{remark}
\begin{proof}
First we check that there is a subsequence $ \{ u_{\epsilon_{i}} \} $  with $ u_{\epsilon_{i}}$ converge uniformly to $u$ on $\overline{\Omega}_{T}$ for some function $u$.
By using the definition of $u_{\epsilon} $, we have
 $$||u_{\epsilon} ||_{L^{\infty}(\Omega_{T})} \le ||F||_{L^{\infty}(\Omega_{T})} < \infty $$
for any $\epsilon > 0$. Hence, $u_{\epsilon} $ are uniformly bounded.  
By means of Lemma \ref{latbest}, Lemma \ref{inbest} and the interior regularity result \cite[Theorem 5.2]{MR4153524}, we know that $ \{ u_{\epsilon} \} $
are equicontinuous.  
Therefore, we can find a subsequence $\{ u_{\epsilon_{i}} \}_{i=1}^{\infty} $ converging uniformly to a function $u \in C(\overline{\Omega}_{T})$
by Lemma \ref{aras}.

Now we need to show that $u$ is a viscosity solution to \eqref{paraplap}.
On the parabolic boundary, we see that
$$ u(x,t) = \lim_{i \to \infty}u_{\epsilon_{i}}(x,t) = F(x,t)$$
for any $(x,t) \in \partial_{p}\Omega_{T}$.

Next we prove that $u$ satisfies 
$$(n+p)u_{t}= \Delta_{p}^{N} u  \qquad \textrm{in} \ \Omega_{T}$$
in the viscosity sense.
Without loss of generality, it suffices to show that $u$ satisfies condition (a) in Definition \ref{vissol}. 

Fix $(x,t) \in \Omega_{T}$.
Then there is a small number $R>0$ such that 
$$ Q:= (x_{0},t_{0}) + B_{R}(0) \times (-R^{2},0) \subset \subset \Omega_{T} .$$
We also assume that $\epsilon>0$ satisfies $Q \subset \Omega_{T} \backslash I_{\epsilon,T}$.
Suppose that a function $\varphi \in C^{2}(Q)$ touches $u$ from below at $(x,t)$.
Then we observe that
$$ \inf_{Q } (u- \varphi) = (u-\varphi)(x,t) \le (u-\varphi)(z,s)  $$
for any $(z,s)  \in Q $.
Since $u_{\epsilon}$ converge uniformly to $u$, for sufficiently small $\epsilon >0$,
there is a point $ (x_{\epsilon},t_{\epsilon}) \in Q$  such that 
$$ \inf_{Q } (u_{\epsilon}- \varphi) \le (u_{\epsilon}-\varphi)(z,s)  $$
for any $(z,s)   \in Q  $.
We also check that $(x_{\epsilon}, t_{\epsilon}) \to (x,t)$ as $ \epsilon \to 0$.

Recall \eqref{deft}. Since $(x_{\epsilon}, t_{\epsilon}) \in \Omega_{T} \backslash I_{\epsilon,T}$,
we have
\begin{align*}
Tu(x,t) =\midrg_{\nu \in S^{n-1}} \mathscr{A}_{\epsilon}u \bigg( x, t- \frac{\epsilon^{2}}{2} ; \nu \bigg).
\end{align*}
We also set $ \psi = \varphi +( u_{\epsilon}-\varphi)(x_{\epsilon}, t_{\epsilon})$
and observe that $u_{\epsilon} \ge \psi$ in $Q$.
Now it can be checked that
\begin{align*}
u_{\epsilon}(x_{\epsilon}, t_{\epsilon})=Tu_{\epsilon}(x_{\epsilon}, t_{\epsilon}) 
 \ge T\psi(x_{\epsilon}, t_{\epsilon}) 
\end{align*}
and
\begin{align*}
 T\psi(x_{\epsilon}, t_{\epsilon}) =T\varphi(x_{\epsilon}, t_{\epsilon})+( u_{\epsilon}-\varphi)(x_{\epsilon}, t_{\epsilon})
\end{align*}
Therefore,
\begin{align*}
u_{\epsilon}(x_{\epsilon}, t_{\epsilon}) \ge T\varphi(x_{\epsilon}, t_{\epsilon})+( u_{\epsilon}-\varphi)(x_{\epsilon}, t_{\epsilon})
\end{align*}
and this implies
\begin{align} \label{2est1} 0  \ge T\varphi (x_{\epsilon}, t_{\epsilon}) - \varphi (x_{\epsilon}, t_{\epsilon}). 
\end{align}

On the other hand, by the Taylor expansion, we observe that
\begin{align*} \frac{1}{2} & \bigg[ \varphi \bigg(x+\epsilon \nu, t-\frac{\epsilon^{2}}{2} \bigg)+ \varphi\bigg(x-\epsilon \nu, t-\frac{\epsilon^{2}}{2} \bigg) \bigg] \\ &
= \varphi (x,t) -\frac{\epsilon^{2}}{2} \varphi_{t}(x,t)+ \frac{\epsilon^{2}}{2} \langle D^{2} \varphi (x,t) \nu, \nu \rangle + o( \epsilon^{2})
\end{align*}
and 
\begin{align*}
\kint_{B_{\epsilon}^{\nu} } &\varphi \bigg(x+ h,t-\frac{\epsilon^{2}}{2} \bigg) d \mathcal{L}^{n-1}(h) \\ & =  \varphi (x,t) -\frac{\epsilon^{2}}{2} \varphi_{t}(x,t)+ \frac{\epsilon^{2}}{2(n+1)}
\Delta_{\nu^{\perp}} \varphi (x,t) +  o( \epsilon^{2})
\end{align*}
where $$ \Delta_{\nu^{\perp}} \varphi (x,t) = \sum_{i=1}^{n-1} \langle D^{2} \varphi (x,t) \nu_{i}, \nu_{i} \rangle$$ with 
$ \nu_{1}, \cdots, \nu_{n-1} $ the orthonormal basis for the space $\nu^{\perp} $
for $ \nu \in S^{n-1} $.

We already know that $  \mathscr{A}_{\epsilon} \varphi $ is continuous with respect to $\nu$ in Proposition \ref{contilem}.
Therefore, there exists a vector $\nu_{\min} = \nu_{\min}(\epsilon) $ minimizing 
$  \mathscr{A}_{\epsilon} \varphi (x_{\epsilon, \eta}, t_{\epsilon}; \cdot ) .$
Then we can calculate
\begin{align*}
& T\varphi(x_{\epsilon}, t_{\epsilon})
\\ & \ge  \frac{\alpha}{2}
\bigg\{ \varphi \bigg( \hspace{-0.1em} x_{\epsilon}\hspace{-0.1em}+\hspace{-0.1em} \nu_{\min} , t_{\epsilon}-\frac{\epsilon^{2}}{2}  \bigg) \hspace{-0.3em}+ \hspace{-0.3em} \varphi \bigg( \hspace{-0.1em} x_{\epsilon}\hspace{-0.1em}-\hspace{-0.1em} \nu_{\min} , t_{\epsilon}-\frac{\epsilon^{2}}{2}  \bigg) \hspace{-0.2em} \bigg\}
\\ & \qquad +\beta \kint_{B_{\epsilon}^{\nu_{\min}} } \varphi \bigg(x_{\epsilon}+ h,t_{\epsilon}-\frac{\epsilon^{2}}{2} \bigg) d \mathcal{L}^{n-1}(h) 
\\ & \ge  \varphi(x_{\epsilon}, t_{\epsilon}) - \frac{\epsilon^{2}}{2}  \varphi_{t}(x_{\epsilon}, t_{\epsilon}) 
\\ & \qquad + \frac{\beta}{2(n+1)} \epsilon^{2} \big\{  \Delta_{\nu_{\min}^{\perp}} \varphi (x_{\epsilon}, t_{\epsilon}) + (p-1) \langle D^{2} \varphi (x_{\epsilon}, t_{\epsilon}) \nu_{\min}, \nu_{\min} \rangle \big\}
.
\end{align*}
Then by \eqref{2est1}, we observe that
\begin{align} \begin{split} \label{2est2}
\frac{\epsilon^{2}}{2} \varphi_{t}(x_{\epsilon}, t_{\epsilon}) & \ge   
\frac{\beta \epsilon^{2} }{2(n+1)} \big\{  \Delta_{\nu_{\min}^{\perp}} \varphi (x_{\epsilon}, t_{\epsilon}) + (p-1) \langle D^{2} \varphi (x_{\epsilon}, t_{\epsilon}) \nu_{\min}, \nu_{\min} \rangle \big\}  .
\end{split}
\end{align}

Suppose that $D\varphi (x,t) \neq 0 $. Since $(x_{\epsilon}, t_{\epsilon}) \to (x,t)$ as $\epsilon \to 0$, it can be seen that
$$\nu_{\min} \to -\frac{D\varphi (x,t)}{|D\varphi (x,t)|} =: -\mu $$
as $\epsilon \to 0$.
We also check that 
$$\Delta_{(-\mu)^{\perp}} \varphi (x,t) + (p-1) \langle D^{2} \varphi (x_{\epsilon}, t_{\epsilon}) (-\mu), (-\mu) \rangle =\Delta_{p}^{N} \varphi (x,t)   .$$
Now we divide both side in \eqref{2est2} by $\epsilon^{2}$ and let $\epsilon \to 0$.
Since $Q_{R} \subset \Omega_{T}$, it can be seen that $ \delta(x_{\epsilon}, t_{\epsilon}) \epsilon^{-2} \to 0$ as $ \epsilon \to 0$. 
Hence, we deduce
$$ \varphi_{t}(x,t) \ge \frac{1}{n+p}  \Delta_{p}^{N} \varphi (x,t)   .$$

Next consider the case $D\varphi (x,t) = 0  $.
Observe that 
\begin{align*}
& \Delta_{\nu_{\min}^{\perp}} \varphi (x_{\epsilon}, t_{\epsilon}) + (p-1) \langle D^{2} \varphi (x_{\epsilon}, t_{\epsilon}) \nu_{\min}, \nu_{\min} \rangle \\ &
=  \Delta  \varphi (x_{\epsilon}, t_{\epsilon}) + (p-2) \langle D^{2} \varphi (x_{\epsilon}, t_{\epsilon}) \nu_{\min}, \nu_{\min} \rangle .
\end{align*}
For $p \ge 2$,  we see
\begin{align*} (p-2) \langle D^{2} \varphi (x_{\epsilon}, t_{\epsilon}) \nu_{\min}, \nu_{\min} \rangle \ge (p-2) \lambda_{\min}(D^{2} \varphi (x_{\epsilon}, t_{\epsilon})).
\end{align*}
We already know that $(x_{\epsilon}, t_{\epsilon}) \to (x,t)$ as $\epsilon \to 0$ and the map $z  \mapsto \lambda_{\min}(D^{2}\varphi(z)) $ is continuous.
Therefore, it turns out
\begin{align} \label{2est3} \varphi_{t}(x,t) \ge \frac{1}{n+p} \big\{ \Delta\varphi(x,t) + (p-2)\lambda_{\min}(D^{2}\varphi(x,t) ) \big\}
\end{align}
by similar calculation in the previous case.

For $1<p<2$, by using similar argument in the previous case and
\begin{align*} (p-2) \langle D^{2} \varphi (x_{\epsilon}, t_{\epsilon}) \nu_{\min}, \nu_{\min} \rangle & \ge (p-2) \lambda_{\max}(D^{2} \varphi (x_{\epsilon}, t_{\epsilon})) \\
& = \lambda_{\min}( (p-2)D^{2} \varphi (x_{\epsilon}, t_{\epsilon})), 
\end{align*}
we also obtain the inequality \eqref{2est3}.

We can also prove the reverse inequality to consider a function $\varphi $ touching $u$ from above and a vector $\nu_{\max} $ maximizing $  \mathscr{A}_{\epsilon} \varphi (x_{\epsilon}, t_{\epsilon}; \cdot) $
and to do similar calculation again as above.
\end{proof}

\bibliographystyle{alpha}

\end{document}